\documentclass[11pt,reqno]{amsart}

\usepackage[T1]{fontenc}
\usepackage{lmodern}
\usepackage{microtype}
\usepackage[margin=1.03in]{geometry}
\usepackage{amsmath,amssymb,amsfonts,mathtools}
\usepackage{amsthm}
\usepackage{booktabs}
\usepackage{enumitem}
\usepackage{xcolor}
\usepackage{url}
\usepackage[colorlinks=true,
  linkcolor=blue!50!black,
  citecolor=blue!50!black,
  urlcolor=blue!50!black]{hyperref}
\usepackage[capitalize,noabbrev]{cleveref}

\allowdisplaybreaks
\numberwithin{equation}{section}
\setlist[enumerate]{leftmargin=2.2em,itemsep=0.25em,topsep=0.4em}
\setlist[itemize]{leftmargin=2em,itemsep=0.25em,topsep=0.4em}

\theoremstyle{plain}
\newtheorem{theorem}{Theorem}[section]
\newtheorem{proposition}[theorem]{Proposition}
\newtheorem{lemma}[theorem]{Lemma}
\newtheorem{corollary}[theorem]{Corollary}

\theoremstyle{definition}

\theoremstyle{remark}

\newcommand{\F}{\mathbf F}
\newcommand{\Spec}{\operatorname{Spec}}
\newcommand{\Frac}{\operatorname{Frac}}
\newcommand{\Span}{\operatorname{Span}}
\newcommand{\fm}{\mathfrak m}
\newcommand{\cS}{\mathcal S}

\hypersetup{
  pdftitle={Polynomial extensions do not preserve the strong finite type property},
  pdfauthor={}
}

\title[Polynomial extensions do not preserve the strong finite type property]
  {Polynomial extensions do not preserve \\ the strong finite type property}

\author{Viet-Hoang Tran}
\address{Department of Mathematics, National University of Singapore, Singapore 119076}
\email{hoang.tranviet@u.nus.edu}
\urladdr{https://vh-tran.github.io/}

\author{Phan Thanh Toan}
\address{Analytical and Algebraic Methods in Optimization Research Group, Faculty of Mathematics and Statistics, Ton Duc Thang University, Ho Chi Minh City, Vietnam}
\email{phanthanhtoan@tdtu.edu.vn}

\author{Thieu N. Vo}
\address{Department of Computer Science, University of Bath, United Kingdom}
\email{ntv22@bath.ac.uk}

\author{Tan M. Nguyen}
\address{Department of Mathematics, National University of Singapore, Singapore 119076}
\email{tanmn@nus.edu.sg}
\urladdr{https://tanmnguyen89.github.io/}

\date{}

\subjclass[2020]{Primary 13E05; Secondary 13F20, 13A15, 13F05}
\keywords{strong finite type, polynomial ring}

\begin{document}
\raggedbottom

\begin{abstract}
Arnold introduced the strong finite type (SFT) property in 1973 while studying
the dimension of power series rings.  For several classes of rings, polynomial
extension is known to preserve the SFT property, but the general question
remained open.  We answer it negatively by constructing an SFT ring $R$ such
that $R[X]$ is not SFT.
\end{abstract}

\maketitle

\section{Introduction}

Throughout the paper, rings are commutative and have identity.  Following
Arnold \cite{Arnold1973Krull}, an ideal $I$ of a ring $R$ is of
\emph{strong finite type}, or \emph{SFT}, if there are a finitely generated
ideal $B\subseteq I$ and an integer $N\geq1$ such that
\begin{equation}\label{eq:sft-definition}
  a^N\in B\qquad\text{for every }a\in I.
\end{equation}
The ring $R$ is an SFT ring if every ideal of $R$ is SFT.  Noetherian rings
are SFT, by taking $B=I$ and $N=1$.  The point of the definition is that
$I$ itself may be infinitely generated: after passing to $R/B$, all
elements coming from $I$ are nilpotent with one common bound on their
nilpotency indices.  This is an elementwise condition; it does not say that
$I^N\subseteq B$.

\subsection{Why SFT rings arose}

Arnold introduced the SFT condition in his work on prime ideals and Krull
dimension in formal power-series rings
\cite{Arnold1973Krull,Arnold1973Prufer}.  The finiteness issue behind the
definition is particularly visible for formal series.  Given an ideal
$I\subseteq R$, set
\[
 I[[X]]:=\left\{\sum_{j\geq0}a_jX^j:a_j\in I\right\}.
\]
This coefficientwise ideal can be larger than the extended ideal
$IR[[X]]$: a series with coefficients in $I$ need not have all its
coefficients in one finitely generated subideal of $I$.  Arnold showed that
the SFT condition is equivalent to the radical equality
\[
  P[[X]]=\sqrt{PR[[X]]}
  \qquad\text{for every }P\in\Spec R,
\]
and proved in particular that
\begin{equation}\label{eq:arnold}
  R\text{ not SFT}\quad\Longrightarrow\quad
  \dim R[[X]]=\infty.
\end{equation}
See \cite[Theorem~1]{Arnold1973Krull}; a later account is given by
Roitman \cite{Roitman2015}.  Thus finite dimensionality of $R[[X]]$ forces
$R$ to be SFT.  The condition arose from the formal power-series problem
rather than simply as an abstract weakening of Noetherianity.

Arnold's result led to two questions: if $R$ is SFT, must
$R[[X]]$ be SFT, and, when $R$ is also finite dimensional, must $R[[X]]$
remain finite dimensional?  Coykendall answered both questions negatively
by constructing a finite-dimensional SFT domain whose one-variable formal
power-series ring is neither SFT nor finite dimensional
\cite{Coykendall2002}.  Positive results survive for important classes.  For
example, formal power-series rings over SFT Pr\"ufer domains are SFT
\cite{KangPark2007}.  Related work has treated zero-dimensional SFT rings
\cite{CondoCoykendallDobbs1996} and the localization of $R[[X]]$ at the
power series having unit content, denoted $R((X))$ in
\cite{ElaoudKang2006}.  These results reflect the central role of formal
power-series extensions in the early SFT literature.

\begin{table}[!t]
\centering
\caption{The main SFT extension questions.}
\label{tab:sft-extension-status}
\footnotesize
\setlength{\tabcolsep}{5pt}
\renewcommand{\arraystretch}{1.15}
\begin{tabular}{@{}p{0.40\textwidth}p{0.55\textwidth}@{}}
\toprule
Question & Current status \\
\midrule
If $\dim R[[X]]<\infty$, must $R$ be SFT?
  & Yes, by Arnold \cite[Theorem~1]{Arnold1973Krull}. \\ \addlinespace[0.25em]
If $R$ is finite dimensional and SFT, must $\dim R[[X]]<\infty$?
  & No, even for a one-dimensional SFT domain \cite{Coykendall2002}. \\ \addlinespace[0.25em]
If $R$ is SFT, must $R[[X]]$ be SFT?
  & No in general \cite{Coykendall2002}; yes for SFT Pr\"ufer domains
    \cite{KangPark2007}, without a finite-dimensional hypothesis. \\ \addlinespace[0.25em]
If $I$ is an SFT ideal of $R$, must $IT$ be SFT for every ring extension
$R\subseteq T$?
  & Yes \cite[Theorem~4.13]{CoykendallDutta2024}; in particular, $IR[X]$ is
    SFT. \\ \addlinespace[0.25em]
If $R$ is SFT, must $R[X]$ be SFT?
  & Yes under Park's additional hypotheses \cite{Park2019}; the general
    question was still listed as open in 2024 \cite{CoykendallDutta2024}.
    \textbf{It is false in general by \cref{thm:main}.} \\
\bottomrule
\end{tabular}
\end{table}

\subsection{The polynomial-extension question}

The corresponding problem for $R[X]$ followed a different course.  Park
proved that the SFT property passes to polynomial extensions in several
substantial cases: zero-dimensional SFT rings, SFT Pr\"ufer domains, and SFT
domains $R$ such that $R/P$ is integrally closed for every
$P\in\Spec R$ \cite{Park2019}.  These results did not settle the general
question.

Coykendall and Dutta later proved that if $I$ is an SFT ideal of $R$, then
its extension to any ring extension of $R$ is again SFT.  In particular,
$IR[X]$ is SFT \cite[Theorem~4.13]{CoykendallDutta2024}.  In the same work
they still recorded the ring-level implication
\[
  R\text{ SFT}\quad\Longrightarrow\quad R[X]\text{ SFT}
\]
as open.  There is no contradiction between these two statements: a prime
ideal of $R[X]$ need not be extended from its contraction to $R$.  This is
exactly where our counterexample occurs.

The principal extension questions and their present status are collected in
\cref{tab:sft-extension-status}.  The last row isolates the question answered
in this paper.

Our main result is the following.

\begin{theorem}\label{thm:main}
There is a one-dimensional local SFT domain $R$ such that $R[X]$ is not
SFT.  More precisely, $R[X]$ has a non-SFT prime ideal that is an upper to
zero.
\end{theorem}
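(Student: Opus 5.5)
We will build $R$ as a one-dimensional local domain whose maximal ideal $\fm$ is SFT in a very degenerate way, so that every nonzero ideal is automatically SFT. Then we exhibit an upper to zero $Q=fK[X]\cap R[X]$ (with $K=\Frac R$) that is not SFT. The reduction on the $R$ side is standard. In a one-dimensional local domain every nonzero ideal $I$ is $\fm$-primary, so $\sqrt{aR}=\fm$ for any $0\neq a\in I$. It therefore suffices to find a single $N$ with $x^N\in aR$ for all $x\in\fm$, with $N$ allowed to depend on $a$. The concrete plan is to take $R=\F+\fm$ inside a rank-one valuation ring $V$ with value group $\mathbb{Q}$ (or $\mathbb{R}$) and residue field $L\supsetneq\F$. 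We then check SFT by valuations: for $x\in\fm$ we have $v(x^N)=Nv(x)$. The difficulty is that $v(x)$ can be arbitrarily small, so we must engineer $R$ so that $v(\fm)$ is bounded below, for instance $v(x)\geq 1$ for all $x\in\fm$. Then $N>v(a)$ gives $v(x^N/a)>0$, and it remains to arrange that elements of positive value lie in $R$ or $aR$. A cleaner choice is $R=\F+\fm$ with $\fm=\{x\in V: v(x)\ge 1\}$ modified so that $\fm$ is an ideal of $V$ with value set $[1,\infty)$. Then $x^N/a\in\fm$ once $N v(x)-v(a)\geq 1$, so $R$ is SFT with $N=\lceil v(a)\rceil+1$. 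Locality and dimension one come from $V$ being rank one and $R$ being a pullback of $V$ with $\F$ a field, using the standard pullback facts.

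The point is that $V$ is not SFT (the value group is dense), while $R$ is, and $R[X]$ should see $V$. Choose $\theta\in V$ a unit whose residue $\bar\theta\in L\setminus\F$ is algebraic over $\F$, say of degree $2$ with minimal polynomial $g$. Lift $g$ to a monic $f\in R[X]$ with $f(\theta)=0$, arranging this by choosing $\theta$ as a root in $V$ (Hensel's lemma if $V$ is henselian, or simply take $L=\F(\bar\theta)$ and $\theta$ a root of a lift of $g$ with coefficients in $\F$). Set $Q=\{h\in R[X]: h(\theta)=0\}$, an upper to zero. For $c\in V$ with $v(c)>0$ but $c\notin R$ (values in $(0,1)$), write $c=r_0+r_1\theta$ with $r_i\in K$. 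This is possible because $K(\theta)$ has degree $2$ and $V$ is the integral closure-type overring. Then elements $h_c=c'(X-\theta)$-type combinations, namely $r_1X+r_0$ with denominators cleared by an element of $\fm$, give members of $Q$ whose coefficients have values ranging through arbitrarily small positive numbers.

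The main obstacle is proving that $Q$ is not SFT. Suppose $B=(g_1,\dots,g_k)\subseteq Q$ and $N$ witness SFT. Evaluating at $\theta$ is useless since it kills $Q$, so we instead use a coefficientwise valuation (Gauss extension $w$ of $v$ to $K(X)$, $w(\sum a_iX^i)=\min v(a_i)$). Every element of $Q$ is $f\cdot q$ with $q\in K[X]$, and by a content argument we expect $w$ to control membership: $h\in B$ forces $w(h)\ge \min_j w(g_j)+w(\text{multipliers})$, and multipliers from $R[X]$ have $w\ge0$. So $\delta:=\min_j w(g_j)>0$ is a fixed lower bound for $w$ on $B$. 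We then produce $h\in Q$ with $w(h)<\delta/N$. The hard part is that $w(h^N)=Nw(h)$, which needs a Gauss-type multiplicativity that holds for $V$-valued content; this is exactly why we measure with $v$ rather than $R$-contents. The required $h$ comes from the previous paragraph, where values in $(0,1)$ of arbitrarily small size occur. This gives $h^N\notin B$, so $Q$ is not SFT. I expect the delicate verification to be that such small-content $h$ genuinely lie in $R[X]$, i.e.\ all coefficients land in $\fm$ rather than merely in $V$. This is where the conductor $\fm$ and the choice of value set $[1,\infty)$ must be reconciled, possibly by shifting: multiply by an element of value exactly $1$ and compare ratios of $w$ instead of absolute values.
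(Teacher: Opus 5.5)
Your ring $R=\F+\fm$, with $\fm=\{x\in V:v(x)\geq1\}=tV$ where $v(t)=1$, is indeed a one-dimensional local SFT domain, and your estimate $x^N/a\in\fm$ for $N\geq v(a)+1$ proves it. The gap is in the second half, and it cannot be repaired for this $R$: \emph{every} upper to zero of $R[X]$ is SFT.

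First, $\theta\in V\subseteq K$, so $K(\theta)=K$ and your prime is $Q=(X-\theta)K[X]\cap R[X]$, not $fK[X]\cap R[X]$. Because $tV\subseteq R$, we have $tV[X]\subseteq R[X]$ and $R[X]/tV[X]\cong\F[X]$. Choose $h_0\in Q$ whose image generates the (principal) image of $Q$ in $\F[X]$. If $h\in Q\cap tV[X]$, write $h=(X-\theta)q$; multiplicativity of your Gauss valuation gives $w(q)=w(h)\geq1$, so $q\in tV[X]$. Hence $Q=h_0R[X]+t(X-\theta)V[X]$. For $h=h_0a+t(X-\theta)y$ with $a\in R[X]$ and $y\in V[X]$,
\[
  h^2=h_0\bigl(h_0a^2+2at(X-\theta)y\bigr)+t(X-\theta)\cdot t(X-\theta)y^2
  \in\bigl(h_0,\,t(X-\theta)\bigr)R[X],
\]
since $t(X-\theta)y$ and $t(X-\theta)y^2$ lie in $tV[X]\subseteq R[X]$. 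So $Q$ is SFT with exponent $2$. The same computation applies to $pK[X]\cap R[X]$ for any $p\in V[X]$ of content $V$. Primes lying over $\fm$ have the form $h_1R[X]+tV[X]$, and $(tz)^2=t\cdot tz^2\in tR[X]$. By the standard fact that a ring is SFT once all its primes are, $R[X]$ is actually SFT.

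This also shows concretely where your non-SFT argument breaks. Every nonzero element of $R$ has value $0$ or at least $1$, so no $h\in R[X]$ has $0<w(h)<1$. The small-content witnesses you hoped to place in $R[X]$ therefore do not exist, as you half-suspected. Moreover, $\delta=\min_jw(g_j)$ need not be positive: the SFT data may include elements of $Q$ of content $V$ (your monic $f$, or $h_0$ above), which makes the lower bound on $w$ over $B$ vacuous. A valuative size estimate is the wrong kind of obstruction here, because the conductor-like inclusion $tV\subseteq R$ with $R/tV\cong\F$ Noetherian lets squaring absorb everything.

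The idea you are missing is the paper's non-valuative, Frobenius-stable linear filtration $V_n$ of $K=\F_2(u)$, with $u^{2^n}\in V_1$ for all $n$ but $\F_2[u]\nsubseteq V_E$ for every $E$. The elements $c(X-u)^{2^n}=cX^{2^n}+cu^{2^n}$ then lie in the evaluation kernel with uniformly cheap coefficients. Yet any relation $g_n^q\in(h_1,\dots,h_t)$, after cancelling $X-u$ and reading off the $c^q$-coefficient, forces all of $1,u,\dots,u^{2^{n+r}-1}$ into the fixed space $V_{q+d}$. The contradiction is a polynomial-versus-exponential dimension count, not a bound on valuations.
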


Recall that, for a domain $D$, a nonzero prime ideal of $D[X]$ whose
contraction to $D$ is $(0)$ is called an \emph{upper to zero}; this is the
standard terminology used in the polynomial-ring literature
\cite[p.~168]{ChangFontana2009}.  The prime in
\cref{thm:main} is the kernel of evaluation at an element of
$\Frac(R)$.  It is therefore invisible to the theorem about extended
ideals.

\subsection{Organization}

The next two sections give the construction and prove \cref{thm:main},
including the localization and coefficient comparisons needed for the
non-SFT prime.  Only after that proof do we record extensions of the example.
The final section gives consequences, compares the construction with earlier
positive results, and records the precise scope of the examples.

\section{The coefficient ring in characteristic two}
\label{sec:char2}

Let $u$ be transcendental over $\F_2$ and put $K=\F_2(u)$.  Choose an
enumeration
\[
  K^\times=\{\xi_1,\xi_2,\ldots\},\qquad \xi_1=u.
\]
For $n\geq0$, let $V_n$ be the $\F_2$-span of the monomials
\begin{equation}\label{eq:Vn-char2}
  \prod_{\ell=1}^r \xi_{i_\ell}^{2^{e_\ell}},
  \qquad r\geq0,\quad e_\ell\geq0,\quad
  \sum_{\ell=1}^r i_\ell\leq n,
\end{equation}
where repeated factors are allowed and the empty product is $1$.  The index
$i$ measures the cost of using $\xi_i$, while taking a Frobenius power does
not increase that cost.  This is why squaring will preserve each $V_n$.

\begin{proposition}\label{prop:V-properties}
The spaces $V_n$ have the following properties:
\begin{enumerate}[label=\textup{(\alph*)}]
\item $V_0=\F_2$, $V_n\subseteq V_{n+1}$, and
      $\bigcup_{n\geq0}V_n=K$;
\item $V_nV_m\subseteq V_{n+m}$ for all $m,n\geq0$;
\item $v^2\in V_n$ for every $v\in V_n$;
\item $\F_2[u]\nsubseteq V_E$ for every $E\geq0$.
\end{enumerate}
\end{proposition}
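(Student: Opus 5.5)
Parts (a)--(c) come straight from the definition \eqref{eq:Vn-char2}, and I will do them first. For (a): when $n=0$ the only admissible monomial is the empty product, so $V_0=\F_2$. Each monomial allowed at level $n$ is also allowed at level $n+1$, so $V_n\subseteq V_{n+1}$. Every $\xi\in K^\times$ equals some $\xi_i$ and lies in $V_i$ (take $r=1$, $e_1=0$), so $\bigcup_n V_n=K$. For (b): the product of an admissible monomial of cost $\le n$ and one of cost $\le m$ is an admissible monomial of cost $\le n+m$ (concatenate the factor lists), and bilinearity of multiplication finishes it. For (c): in characteristic two, squaring is additive, so $(\sum_j c_j\mu_j)^2=\sum_j c_j\mu_j^2$ with $c_j\in\F_2$. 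Squaring a monomial $\prod_\ell\xi_{i_\ell}^{2^{e_\ell}}$ replaces each $e_\ell$ by $e_\ell+1$ and leaves the cost $\sum_\ell i_\ell$ unchanged.

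The substance is (d). Here $V_E$ is infinite dimensional, because the exponents $e_\ell$ are unbounded, so a bare dimension count cannot work. My plan is to show that $V_E$ meets the polynomials of bounded degree in a space of only polylogarithmic dimension. First I would rewrite $V_E$ in a normal form. Every admissible monomial of cost $\le E$ uses only the finitely many elements $\xi_1,\dots,\xi_E$, with at most $E$ factors. Splitting off the factors with $e_\ell=0$ and collecting the remaining ones as a square, each monomial has the form $m\cdot w^2$, where $m$ ranges over a finite set $M_E$ of products of at most $E$ of the $\xi_i$ and $w$ is again an admissible monomial of cost $\le E$. Hence
\[
  V_E\subseteq \sum_{m\in M_E} m\,V_E^{[2]},\qquad V_E^{[2]}:=\{v^2:v\in V_E\},
\]
and iterating $k$ times expresses $V_E$ through $2^k$-th powers, with coefficients drawn from a set of products whose number of factors is at most $E$ in total, because costs add. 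So the number of possible coefficient patterns grows only polynomially in $k$, roughly like $k^{E}$.

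The key step is to convert this into a bound on
\[
  \dim_{\F_2}\bigl(V_E\cap \F_2[u]_{<2^k}\bigr).
\]
I would use the Cartier decomposition $K=K^2\oplus uK^2$, $x=C_0(x)^2+u\,C_1(x)^2$; the maps $C_0,C_1$ are additive and send polynomials of degree $<2^k$ to polynomials of degree $<2^{k-1}$. A polynomial of degree $<2^k$ is determined by its $2^k$ iterated Cartier coordinates, which are its coefficients. Elements of $V_E$, by the normal form, have iterated Cartier images governed by at most $E$ ``non-square'' steps. I would try to show that the coordinates of an element of $V_E\cap\F_2[u]_{<2^k}$ are determined by $O(k^E)$ parameters. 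That gives the polylogarithmic dimension bound, which is incompatible with containing all $2^k$ polynomials of degree $<2^k$ once $k$ is large.

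I expect this step to be the main obstacle. The difficulty is that $C_j(m\,w^2)=C_j(m)\,w$ introduces the fixed but non-polynomial coefficients $C_j(m)$, and sums of rational functions can cancel into polynomials. If the direct Cartier count gets messy, my fallback is to clear denominators. I would fix a common denominator $g$ of $\xi_1,\dots,\xi_E$ and a degree bound $h$ for their numerators. Then I would work inside $\F_2[u,g^{-1}]$ and count how many distinct monomials of cost $\le E$ can contribute to an element of bounded $u$-degree. Since exponents are powers of $2$, at most $O(k)$ exponent choices per factor are relevant at scale $2^k$ (larger Frobenius powers overshoot in degree or in the order of the pole along $g$, and cannot cancel because their divisors are $2^e$-multiples). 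This again gives $O(k^E)$ contributing monomials.
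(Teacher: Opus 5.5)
Parts (a)--(c) are fine and match the paper. The gap is in (d). The step you yourself flag as the main obstacle is never carried out: you do not actually show that $V_E\cap\F_2[u]_{<2^k}$ has $\F_2$-dimension $O(k^E)$ via iterated Cartier coordinates. The fallback is not an argument either. The claim that high Frobenius powers ``cannot cancel because their divisors are $2^e$-multiples'' is unjustified, because different monomials can have the same pole order along $g$ and cancel in a linear combination. That is exactly the cancellation you were worried about with the non-polynomial coefficients $C_j(m)$. As written, (d) is not proved.

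The missing idea is to do the dimension count over the subfield $F_k:=K^{2^k}=\F_2(u^{2^k})$ rather than over $\F_2$. Over $F_k$, every factor with Frobenius exponent at least $k$ becomes a scalar. Your iterated normal form already gives $V_E\subseteq\sum_{\mu\in P_k}\mu F_k$, where $P_k$ is the set of products of at most $E$ factors taken from $\{\xi_i^{2^e}:1\le i\le E,\ 0\le e<k\}$. Hence $\dim_{F_k}(F_kV_E)\le\sum_{r=0}^E(Ek)^r$. On the other hand, $T^{2^k}-t$ is Eisenstein at $t=u^{2^k}$, so $1,u,\dots,u^{2^k-1}$ is an $F_k$-basis of $K$. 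If $\F_2[u]\subseteq V_E$, then $F_kV_E\supseteq F_k[u]=K$ would have $F_k$-dimension $2^k$, which exceeds the polynomial bound for large $k$. This is exactly the paper's proof, written there with $L$ in place of $k$.

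Your remark that ``a bare dimension count cannot work'' holds only over $\F_2$. Changing the ground field removes the infinite dimensionality. This also yields the bound you were aiming for: $\F_2$-independent elements of $\F_2[u]_{<2^k}$ remain $F_k$-independent, so $\dim_{\F_2}(V_E\cap\F_2[u]_{<2^k})\le\dim_{F_k}(F_kV_E)$. No Cartier operators or denominator analysis are needed.
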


\begin{proof}
A monomial for which the sum of the indices is zero is the empty product, so
$V_0=\F_2$.  Increasing the allowed sum of indices gives
$V_n\subseteq V_{n+1}$.  Every nonzero element of $K$ occurs as some
$\xi_i$ and hence belongs to $V_i$; this proves exhaustivity.
Multiplication concatenates the factors in \eqref{eq:Vn-char2} and adds the
sums of their indices, proving (b).

Write $v=\sum_\alpha a_\alpha m_\alpha$, where $a_\alpha\in\F_2$ and the
$m_\alpha$ are monomials from \eqref{eq:Vn-char2}.  Then
\[
  v^2=\sum_\alpha a_\alpha^2m_\alpha^2.
\]
Squaring replaces each exponent $2^{e_\ell}$ by $2^{e_\ell+1}$ and leaves
the sum of the indices unchanged.  Thus $v^2\in V_n$.

It remains to prove (d).  Fix $E\geq0$ and, for $L\geq1$, set
\[
  F_L:=K^{2^L}=\F_2(u^{2^L}).
\]
Writing $t=u^{2^L}$, the polynomial
$T^{2^L}-t\in\F_2[t][T]$ is Eisenstein at the prime
$t\in\F_2[t]$.  Gauss's lemma therefore makes it irreducible over
$\F_2(t)=F_L$.  Consequently
\begin{equation}\label{eq:basis-char2}
  1,u,u^2,\ldots,u^{2^L-1}
\end{equation}
is an $F_L$-basis of $K$.

Consider a monomial occurring in $V_E$.  A factor $\xi_i^{2^e}$ with
$e\geq L$ lies in $F_L$.  After these factors are absorbed into the scalar
field, the remaining product has at most $E$ factors, each chosen from
\[
  \{\xi_i^{2^e}:1\leq i\leq E,\ 0\leq e<L\},
\]
a set of at most $EL$ elements.  If $F_LV_E$ denotes the $F_L$-linear span
of $V_E$, then
\begin{equation}\label{eq:dimension-char2}
  \dim_{F_L}(F_LV_E)\leq \sum_{r=0}^E(EL)^r.
\end{equation}
For $E=0$, the right side is understood as $1$, which also follows directly
from $V_0=\F_2$.  For fixed $E$, the right side grows polynomially in $L$,
whereas $2^L$ grows exponentially.  For sufficiently large $L$, therefore,
the space $F_LV_E$ cannot contain all the $2^L$ basis elements in
\eqref{eq:basis-char2}.  At least one power of $u$ is outside $V_E$, which
proves (d).
\end{proof}

The last paragraph is the only growth argument in the construction.  A
fixed level $V_E$ uses at most $E$ low Frobenius factors, so its dimension
after extending scalars to $F_L$ grows at most polynomially with $L$.  The
field $K$, by contrast, displays $2^L$ independent consecutive powers of
$u$.  This polynomial-versus-exponential comparison prevents any fixed
filtration level from containing $\F_2[u]$.

Since $\xi_1=u$, multiplication and squaring give
\begin{equation}\label{eq:u-bounds-char2}
  u\in V_1,\qquad u^j\in V_j\ (j\geq0),\qquad
  u^{2^n}\in V_1\ (n\geq0).
\end{equation}

Let $c$ be an indeterminate over $K$, and define
\[
  A:=\bigoplus_{n\geq0}c^nV_n\subseteq K[c],\qquad
  \fm:=A_+=\bigoplus_{n\geq1}c^nV_n,
\]
\begin{equation}\label{eq:R-char2}
  R:=A_{\fm},\qquad M:=\fm R.
\end{equation}
Property (b) makes $A$ a graded subring of the domain $K[c]$.  Taking the
constant coefficient gives $A/\fm\cong\F_2$, so $\fm$ is maximal and $R$
is a local domain with maximal ideal $M$.  Notice also that $c^n\in A$ for
all $n$, since $1\in V_n$.

Localization introduces denominators, so the coefficient restrictions in
the definition of $A$ are not automatic for elements of $R$.  The next
lemma supplies the required control.

\begin{lemma}\label{lem:formal-expansion}
Every $r\in R$ has a unique formal $c$-adic expansion
\begin{equation}\label{eq:formal-expansion}
  r=\sum_{n\geq0}c^nr_n\in K[[c]],\qquad r_n\in V_n.
\end{equation}
\end{lemma}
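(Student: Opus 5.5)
Every $r\in R$ is a quotient $r=a/s$ with $a,s\in A$ and $s\notin\fm$. Because $A/\fm\cong\F_2$, such an $s$ has constant term $1$, so $s=1+s'$ with $s'=\sum_{n\geq1}c^ns_n\in\fm$ and $s_n\in V_n$. The plan is to embed $K[c]\subseteq K[[c]]$ and invert $s$ there.

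First, existence. In $K[[c]]$ we have
\[
  s^{-1}=\sum_{k\geq0}(-s')^k=\sum_{k\geq0}s'^k,
\]
which is a legitimate series because $s'^k\in c^kK[[c]]$. Each power $s'^k$ lies in $\bigoplus_{n\geq k}c^nV_n$, since by \cref{prop:V-properties}(b) the set $\bigoplus_n c^nV_n$ is closed under multiplication. The coefficient of $c^n$ in $s^{-1}$ is a finite sum, over $k\leq n$, of coefficients of the $s'^k$. Each of these lies in $V_n$, and $V_n$ is an $\F_2$-vector space, so that coefficient lies in $V_n$. Hence $s^{-1}\in\widehat A:=\{\sum_n c^nr_n: r_n\in V_n\}$. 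This set is a subring of $K[[c]]$: each coefficient of a product is a finite sum of terms $r_ir'_j\in V_iV_j\subseteq V_{i+j}$ with $i+j=n$. Therefore $r=a\cdot s^{-1}\in\widehat A$, which gives the expansion \eqref{eq:formal-expansion}.

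Second, uniqueness. This comes from the injectivity of $R\to K[[c]]$. The map $A\to K[[c]]$ is the inclusion $K[c]\subseteq K[[c]]$. Every $s\in A\setminus\fm$ becomes a unit in $K[[c]]$ because its constant term is $1$. So the universal property of localization gives a ring map $R=A_\fm\to K[[c]]$. This map is injective: $R$ sits inside $\Frac(A)\subseteq K(c)$, and $K(c)\cap K[[c]]$ is compatible with $K[c]\subseteq K[[c]]$ and $K[[c]]$ a domain. Concretely, if $a/s\mapsto 0$ then $a=0$ in $K[[c]]$, hence $a=0$. Coefficients of a power series are unique, so the expansion is unique.

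There is no serious obstacle here. The one point needing care is the coefficient bookkeeping in the geometric series for $s^{-1}$. Each coefficient must be a finite sum of elements of $V_n$, which holds because $s'$ has no constant term. We also use that $V_n$ is additively closed and $V_iV_j\subseteq V_{i+j}$. Property (c) of \cref{prop:V-properties} is not needed at this stage.
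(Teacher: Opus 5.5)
Your proof is correct and follows the paper's argument. In both, the set of series whose $c^n$-coefficient lies in $V_n$ is a subring of $K[[c]]$ containing the inverse of every $s\in A\setminus\fm$, and uniqueness comes from the injectivity of $R\to K[[c]]$. The only differences are that you invert $s$ with the geometric series $\sum_k s'^k$ where the paper uses the recursion $t_n=-\sum_i s_it_{n-i}$, and your remark about $K(c)\cap K[[c]]$ being ``compatible'' is vague, though the concrete argument after it ($a/s\mapsto 0$ forces $a=0$) is enough.
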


\begin{proof}
The set
\[
  \cS:=\left\{\sum_{n\geq0}c^na_n\in K[[c]]:a_n\in V_n\right\}
\]
is a subring of $K[[c]]$: the coefficient of $c^n$ in a product belongs to
\[
  \sum_{i=0}^nV_iV_{n-i}\subseteq V_n.
\]
Let $s\in A\setminus\fm$.  Its constant coefficient $s_0$ is nonzero, and
after multiplying by $s_0^{-1}\in\F_2$ we may write
\[
  s_0^{-1}s=1+\sum_{i=1}^Nc^is_i,\qquad s_i\in V_i.
\]
The inverse in $K[[c]]$ is $\sum_{n\geq0}c^nt_n$, where $t_0=1$ and
\[
  t_n=-\sum_{i=1}^{\min\{n,N\}}s_it_{n-i}.
\]
Induction and $V_iV_{n-i}\subseteq V_n$ show that $t_n\in V_n$.  Thus
$s^{-1}\in\cS$, and every fraction defining an element of $R$ lies in
$\cS$.  The canonical maps
\[
  R\hookrightarrow K[c]_{(c)}\hookrightarrow K[[c]]
\]
are injective, so the expansion is independent of the chosen fraction and
is unique.
\end{proof}

We now prove the two estimates that make every ideal of $R$ SFT.

\begin{lemma}\label{lem:two-estimates}
The following statements hold.
\begin{enumerate}[label=\textup{(\alph*)}]
\item If $z\in M$, then $z^2\in cR$.
\item If $0\neq a\in M$, then $c^D\in aR$ for some $D\geq1$.
\end{enumerate}
\end{lemma}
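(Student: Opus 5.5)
For both parts the plan is to reduce to elements of $A$ (clearing the unit denominator from $A\setminus\fm$) and then use characteristic two: squaring acts coefficientwise on $c$-expansions, and by \cref{prop:V-properties}(c) it keeps each coefficient in the same $V_n$ while doubling the power of $c$.

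\emph{Part (a).} Write $z=b/s$ with $b\in\fm$ and $s\in A\setminus\fm$. Then $b=\sum_{n=1}^N c^nb_n$ with $b_n\in V_n$. Since the characteristic is $2$,
\[
  b^2=\sum_{n=1}^N c^{2n}b_n^2 = c\sum_{n=1}^N c^{2n-1}b_n^2 .
\]
By \cref{prop:V-properties}(c) and (a), $b_n^2\in V_n\subseteq V_{2n-1}$ for $n\ge1$. So the sum on the right lies in $A$, hence $b^2\in cA$. Therefore $z^2=b^2/s^2\in cR$. This step is routine.

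\emph{Part (b).} Write $a=b/s$ as before, with $0\ne b\in\fm$. Since $aR=bR$, it suffices to find $D$ with $c^D\in bR$. Let
\[
  b=\sum_{n=k}^N c^nb_n,\qquad k\ge1,\quad b_k\ne0 .
\]
By exhaustivity in \cref{prop:V-properties}(a), $b_k^{-1}\in V_j$ for some $j$, so $c^jb_k^{-1}\in A$. Put
\[
  x:=c^jb_k^{-1}b = c^{k+j}+t,\qquad t=\sum_{n=k+1}^N c^{n+j}\,b_nb_k^{-1}.
\]
By \cref{prop:V-properties}(b), $b_nb_k^{-1}\in V_{n+j}$, so $x\in bA$.

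The obstacle is that we cannot simply factor $x=c^{k+j}(1+y)$ with $1+y$ a unit of $R$. The quotient $y$ has coefficient $b_nb_k^{-1}$ at $c^{n-k}$, but $b_nb_k^{-1}$ need not lie in $V_{n-k}$. To get around this, I would raise $x$ to a Frobenius power, which raises the $c$-exponents without enlarging the coefficient levels. For $e\ge0$,
\[
  x^{2^e}=c^{2^e(k+j)}+\sum_{n=k+1}^N c^{2^e(n+j)}\bigl(b_nb_k^{-1}\bigr)^{2^e}.
\]
Here $(b_nb_k^{-1})^{2^e}\in V_{n+j}$ by \cref{prop:V-properties}(c). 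Choose $e$ with $2^e\ge N+j$. Then for every $k<n\le N$ we have $n-k\ge1$, so $V_{n+j}\subseteq V_{2^e(n-k)}$. Hence
\[
  x^{2^e}=c^{2^e(k+j)}(1+w),\qquad w=\sum_{n=k+1}^N c^{2^e(n-k)}\bigl(b_nb_k^{-1}\bigr)^{2^e}\in\fm .
\]
Since $1+w\in A\setminus\fm$, it is a unit of $R$. Taking $D=2^e(k+j)$, we get
\[
  c^D=x^{2^e}(1+w)^{-1}\in bR=aR .
\]
The only delicate point is the choice of $e$, which makes the coefficient levels fit under the enlarged $c$-exponents.
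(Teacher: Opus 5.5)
Your proof is correct and follows essentially the same route as the paper. Part (a) is identical. In part (b) the paper also cancels the leading coefficient with an element of $A$ (namely $\lambda=c^sb_\nu^{-2^r}$) and takes a Frobenius power $2^r$ large enough that $(j-\nu)2^r\geq j+s$, giving $c^D(1+w)$ with $w\in\fm$. The only difference is that you normalize before applying Frobenius, so your multiplier is $(c^jb_k^{-1})^{2^e}$; the paper's $\lambda$ uses Frobenius stability of $V_s$ to save powers of $c$, so your $D$ is simply larger.
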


\begin{proof}
For (a), first take $b=\sum_{n=1}^Nc^nb_n\in\fm$.  Then
\begin{equation}\label{eq:square-control}
  \frac{b^2}{c}=\sum_{n=1}^Nc^{2n-1}b_n^2\in A,
\end{equation}
because $b_n^2\in V_n\subseteq V_{2n-1}$.  Writing an arbitrary element of
$M$ as $b/s$ with $s\notin\fm$ gives $(b/s)^2\in cR$.

For (b), write $a=b/t$ with $b\in\fm$ and
$t\in A\setminus\fm$.  Since $t$ is a unit of $R$, it is enough to work
with
\[
  b=\sum_{j=\nu}^Nc^jb_j,\qquad
  \nu\geq1,\quad b_\nu\neq0,\quad b_j\in V_j.
\]
Choose $s\geq0$ with $b_\nu^{-1}\in V_s$.  There are only finitely many
nonzero coefficients above $b_\nu$, so we may choose $r\geq0$ such that
\begin{equation}\label{eq:r-choice-char2}
  (j-\nu)2^r\geq j+s
  \quad\text{whenever }j>\nu\text{ and }b_j\neq0.
\end{equation}
Repeated squaring gives $b_\nu^{-2^r}\in V_s$, and hence
$\lambda:=c^sb_\nu^{-2^r}\in A$.  Moreover,
\begin{equation}\label{eq:ratio-char2}
  \left(\frac{b_j}{b_\nu}\right)^{2^r}
  \in V_jV_s\subseteq V_{j+s}
  \subseteq V_{(j-\nu)2^r}.
\end{equation}
The Frobenius homomorphism now gives
\begin{equation}\label{eq:divisibility-char2}
  \lambda b^{2^r}=c^{\nu2^r+s}
  \left(1+\sum_{j>\nu}c^{(j-\nu)2^r}
  \left(\frac{b_j}{b_\nu}\right)^{2^r}\right)
  =c^{\nu2^r+s}(1+w),
\end{equation}
where \eqref{eq:ratio-char2} shows that $w\in\fm$.  The element $1+w$ is
a unit in $R$.  Thus, with $D=\nu2^r+s$,
\[
  c^D=(1+w)^{-1}\lambda b^{2^r}\in bR=aR.
\]
\end{proof}

\begin{theorem}\label{thm:R-char2}
The ring $R$ is a one-dimensional local SFT domain, and
\[
  \Spec R=\{(0),M\}.
\]
\end{theorem}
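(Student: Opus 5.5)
The strategy is to get the prime spectrum from Lemma~\ref{lem:two-estimates}(b) and the SFT property from Lemma~\ref{lem:two-estimates}(a), together with a short case analysis on the ideal.

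\textbf{Step 1: the spectrum.} We know $R$ is a local domain with maximal ideal $M$, and $M\neq 0$ since $c\in M$. Let $P$ be a nonzero prime and pick $0\neq a\in P$. If $a$ is a unit then $P=R$, which is impossible. So $a\in M$, and Lemma~\ref{lem:two-estimates}(b) gives $c^D\in aR\subseteq P$ for some $D\ge1$. Hence $c\in P$.

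Now take any $z\in M$. By Lemma~\ref{lem:two-estimates}(a), $z^2\in cR\subseteq P$, so $z\in P$. Therefore $P=M$, which gives $\Spec R=\{(0),M\}$ and $\dim R=1$.

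\textbf{Step 2: SFT.} Let $I$ be an ideal of $R$. If $I=0$ or $I=R$, it is finitely generated. Otherwise $0\neq I\subseteq M$; choose $0\neq a\in I$.

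\begin{itemize}
\item Step~1 supplies $D$ with $c^D\in aR$.
\item For $z\in I\subseteq M$, Lemma~\ref{lem:two-estimates}(a) gives $z^2=cy$ with $y\in R$.
\item Then $z^{2D}=c^Dy^D\in aR$.
\end{itemize}

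So $B=aR$ and $N=2D$ witness that $I$ is SFT. Note that $N$ depends only on the chosen $a$, not on $z$, which is exactly what the definition requires.

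\textbf{Where the difficulty lies.} All the real work was already done in the two estimates of Lemma~\ref{lem:two-estimates}. The one point to check is that $D$ is uniform over $I$, and it is, since $D$ is fixed by $a$ alone. I would also verify that the argument does not tacitly require $R$ to be Noetherian. Indeed it does not, and $R$ is in fact not Noetherian, since $V_1$ is infinite-dimensional over $\F_2$.
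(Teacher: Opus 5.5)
Your proof is correct and is essentially the paper's argument. Both proofs use Lemma~\ref{lem:two-estimates}(b) to obtain $c^D\in aR$, and part (a) to get $z^{2D}=(z^2)^D\in c^DR\subseteq aR$, which gives SFT data $(I,(a),2D)$; the same two facts force every nonzero prime to contain $c$ and then $M$. The only difference is that you treat the spectrum before the SFT property, which is immaterial.
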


\begin{proof}
Let $I$ be a nonzero proper ideal of $R$ and choose $0\neq a\in I$.
Because $R$ is local, $I\subseteq M$.  By
\cref{lem:two-estimates}(b), there is $D\geq1$ with $c^D\in aR$.  For
every $z\in I$,
\[
  z^{2D}=(z^2)^D\in c^DR\subseteq aR.
\]
The principal ideal $(a)$ is contained in $I$, so $I$ is SFT with the
finitely generated subideal $(a)$ and exponent $2D$.  The zero and unit
ideals are SFT directly, and hence every ideal of $R$ is SFT.

The ring $R$ is a domain, so $(0)$ is prime.  If $P$ is a nonzero prime,
choose $0\neq a\in P$.  Part (b) gives $c^D\in aR\subseteq P$, hence
$c\in P$.  For $z\in M$, part (a) gives $z^2\in cR\subseteq P$, so
$z\in P$.  Thus $M\subseteq P$, and maximality gives $P=M$.  Since
$0\neq c\in M$, these two primes are distinct and $\dim R=1$.
\end{proof}

\section{A non-SFT upper to zero in \texorpdfstring{$R[X]$}{R[X]}}
\label{sec:bad-prime}

We first identify the fraction field of $R$.  If $q\in K$, choose $n$ with
$q\in V_n$.  Then $c^nq,c^n\in A$, so
\[
  q=\frac{c^nq}{c^n}\in\Frac(R).
\]
Thus $K\subseteq\Frac(R)$; since $c\in R$, this gives
$K(c)\subseteq\Frac(R)$.  The reverse inclusion follows from
$R\subseteq K(c)$.  Therefore
\begin{equation}\label{eq:fraction-field}
  \Frac(R)=K(c).
\end{equation}
Consider evaluation at $u$:
\begin{equation}\label{eq:Q}
  Q:=\ker\bigl(R[X]\longrightarrow K(c),\ X\longmapsto u\bigr).
\end{equation}
The quotient $R[X]/Q\cong R[u]$ is a domain, so $Q$ is prime, and the map
is injective on $R$, so $Q\cap R=(0)$.  The ideal is nonzero because
$c(X-u)=cX-cu\in R[X]$: here $c\in A$ and $cu\in cV_1\subseteq A$.
Thus $Q$ is an upper to zero.

Division by $X-u$ shifts the coefficient bound by at most the degree.

\begin{lemma}\label{lem:division-char2}
Let $d\geq1$, and let $h\in R[X]$ have degree at most $d$ and satisfy
$h(u)=0$ in $K(c)$.  Write $h=(X-u)\widetilde h$ in $K(c)[X]$.  Then
$\widetilde h\in K[c]_{(c)}[X]$, and, for every $n\geq0$, the coefficient
of $c^n$ in each $X$-coefficient of $\widetilde h$ belongs to
$V_{n+d-1}$.
\end{lemma}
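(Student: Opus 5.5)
Write $h=\sum_{k=0}^d h_kX^k$ with $h_k\in R$. By \cref{lem:formal-expansion}, each $h_k$ has a formal expansion $h_k=\sum_{n\ge0}c^nh_{k,n}$ with $h_{k,n}\in V_n$, and $h_k\in K[c]_{(c)}$. Since $X-u$ is monic with coefficients in $K\subseteq K[c]_{(c)}$, polynomial long division by $X-u$ stays inside $K[c]_{(c)}[X]$. Because $h(u)=0$, the remainder vanishes, so $\widetilde h\in K[c]_{(c)}[X]$.

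For the coefficient bound, I would compute the quotient explicitly. Write $\widetilde h=\sum_{k=0}^{d-1}g_kX^k$. Comparing coefficients in $h=(X-u)\widetilde h$ gives the backward recursion $g_{d-1}=h_d$ and $g_{k-1}=h_k+u\,g_k$. Hence
\[
  g_k=\sum_{j=k+1}^{d}u^{\,j-k-1}h_j ,\qquad 0\le k\le d-1 .
\]
This is a finite sum in $K[c]_{(c)}\subseteq K[[c]]$. Taking the coefficient of $c^n$ termwise (all the $u^{j-k-1}$ lie in $K$) gives
\[
  [c^n]g_k=\sum_{j=k+1}^d u^{\,j-k-1}h_{j,n}.
\]

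Now I apply \eqref{eq:u-bounds-char2}, which gives $u^{j-k-1}\in V_{j-k-1}\subseteq V_{d-1}$ since $j-k-1\le d-1$. By \cref{prop:V-properties}(b),
\[
  u^{j-k-1}h_{j,n}\in V_{d-1}V_n\subseteq V_{n+d-1}.
\]
Since $V_{n+d-1}$ is an $\F_2$-subspace, the sum lies in $V_{n+d-1}$, as required.

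There is no real obstacle here. The only care needed is to justify reading off $c^n$-coefficients termwise. This follows from uniqueness of expansions in $K[[c]]$ and the fact that multiplication by an element of $K$ acts coefficientwise. The alternative identity $g_k=-\sum_{j\le k}u^{\,j-k-1}h_j$, valid because $h(u)=0$, involves negative powers of $u$. I would avoid it, since $u^{-1}$ has no controlled level.
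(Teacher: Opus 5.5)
Your proposal is correct and is essentially the paper's proof. The paper reaches the same explicit quotient $\sum_{j} h_j\sum_{t=0}^{j-1}X^{j-1-t}u^t$ through the identity $h(X)-h(u)=\sum_j h_j(X^j-u^j)$ rather than your backward recursion, and then bounds the $c^n$-coefficients exactly as you do, using $u^t\in V_t$ and $V_nV_t\subseteq V_{n+d-1}$.
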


\begin{proof}
Write $h=\sum_{j=0}^dh_jX^j$.  Since $h(u)=0$,
\begin{equation}\label{eq:division-formula}
 h(X)=h(X)-h(u)=\sum_{j=1}^dh_j(X^j-u^j)
 =(X-u)\sum_{j=1}^dh_j\sum_{t=0}^{j-1}X^{j-1-t}u^t.
\end{equation}
Since $h_j\in R\subseteq K[c]_{(c)}$ and $u\in K$, the displayed quotient
belongs to $K[c]_{(c)}[X]$.
By \cref{lem:formal-expansion}, the coefficient of $c^n$ in $h_j$ lies in
$V_n$.  Equation \eqref{eq:u-bounds-char2} gives $u^t\in V_t$.  Hence the
coefficient of $c^n$ in every coefficient of the last polynomial in
\eqref{eq:division-formula} lies in
\[
  \sum_{t=0}^{d-1}V_nV_t\subseteq V_{n+d-1}.
\]
\end{proof}

\begin{theorem}\label{thm:Q-not-sft}
The prime $Q$ is not SFT.  Consequently $R[X]$ is not SFT.
\end{theorem}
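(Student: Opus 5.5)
Suppose, for contradiction, that $Q$ is SFT, with a finitely generated ideal $B=(g_1,\ldots,g_k)\subseteq Q$ and an exponent $N\geq1$ such that $f^N\in B$ for every $f\in Q$. We will find elements of $Q$ of degree one in $X$ whose $N$-th powers cannot lie in $B$. The natural test elements are
\[
  f_q:=c^{m}q\,(X-u)=c^{m}qX-c^{m}qu, \qquad q\in K,
\]
with $m$ large enough that $c^mq$ and $c^mqu$ lie in $A$. For instance $q\in V_n$ and $m=n+1$ work, by \cref{prop:V-properties}(b) and \eqref{eq:u-bounds-char2}. Then $f_q\in Q$. Let $d$ bound the $X$-degrees of the $g_i$.

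The key step is a coefficient comparison. Write $f_q^N=\sum_i a_ig_i$ with $a_i\in R[X]$, and divide by $(X-u)$. By \cref{lem:division-char2}, each $g_i=(X-u)\widetilde g_i$, where the $c^n$-coefficient of each $X$-coefficient of $\widetilde g_i$ lies in $V_{n+d-1}$. By \cref{lem:formal-expansion}, the $c^n$-coefficients of the $a_i$ lie in $V_n$. Hence
\[
  c^{mN}q^N(X-u)^{N-1}=\sum_i a_i\widetilde g_i
\]
has every $c^n$-coefficient of every $X$-coefficient in $V_{n+d-1}$, by \cref{prop:V-properties}(b). Now compare the leading $X^{N-1}$ coefficient. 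It gives $q^N\in V_{mN+d-1}$ when we look at the $c^{mN}$ term, but this is useless if $m$ grows with $q$.

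The real obstacle is to choose $q$ so that the power of $c$ needed to put $f_q$ into $R[X]$ is bounded while $q^N$ is ``expensive''. I would take $q$ of the form $u^{j}$ times Frobenius-cheap factors, or better $q=u^{2^L}$. Then $q\in V_1$ and $c\,q$, $c\,qu\in A$ with $m=2$ fixed, yet
\[
  q^N(X-u)^{N-1}=u^{2^LN}\sum_t \binom{N-1}{t}X^{N-1-t}(-u)^t .
\]
This is still cheap, since $u^{2^LN}$ involves only $N$ factors of $\xi_1$. So plain powers of $u$ fail, and the argument must exploit the lower $X$-coefficients and all $c$-levels simultaneously.

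My actual plan is to use the family $f=\sum_j c\,h_j X^j$ ranging over all $c$-multiples of polynomials in $K[X]$ vanishing at $u$, which need not be multiples of single cheap elements. The quotient of such an $f$ by $X-u$ involves the elements $u^t h_j$, and these can have arbitrarily high cost. Concretely, I would take $f=c\,(X^{e}-u^{e})$ with $e=2^L$. Then $f\in Q$ because $u^{2^L}\in V_1$, and
\[
  f^N=c^N(X-u)^N\Bigl(\sum_{t<e}X^{e-1-t}u^t\Bigr)^N .
\]
Comparing coefficients at $c^N$, and working with $F_L$-spans as in the proof of \cref{prop:V-properties}(d), the elements $u^t$ with $t<2^L$ would have to lie in the $F_L$-span of a level $V_{N+d'}$ whose dimension is polynomial in $L$. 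That contradicts the $2^L$ independent powers $1,u,\ldots,u^{2^L-1}$. Making this comparison precise, that is, extracting the individual $u^t$ from the product after dividing out $(X-u)^N$ and controlling the extra $c$-levels coming from the $a_i$, is the step I expect to be hardest.
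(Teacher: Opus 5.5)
Your final plan has the right test elements and the right mechanism. In characteristic $2$, $c(X^{2^L}-u^{2^L})=c(X-u)^{2^L}$ is exactly the paper's $g_L$, and the mechanism is to cancel one factor $X-u$ and compare $c^N$-coefficients via \cref{lem:formal-expansion,lem:division-char2}. But the proposal stops at the step that carries the whole proof, and the route you sketch for it would fail. You cannot ``divide out $(X-u)^N$'': the generators $g_i$ of $B$ are only known to be divisible by $X-u$ once. After dividing $\sum_i a_ig_i$ by $(X-u)^N$, the individual terms are no longer polynomials, and you lose all coefficient control. Nor do you need an $F_L$-span argument or any further control of the $a_i$. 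The convolution bound $V_sV_{\ell+d-1}\subseteq V_{n+d-1}$ from your own first paragraph already handles every $c$-level.

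The missing observation is elementary. Cancel $X-u$ only once. With $e=2^L$ and $S:=\sum_{t<e}X^{e-1-t}u^t$, this gives
\[
  c^N(X^e-u^e)^{N-1}S=\sum_i a_i\widetilde g_i ,
\]
so every $X$-coefficient of $G:=(X^e-u^e)^{N-1}S\in K[X]$ lies in $V_{N+d-1}$. Now $(X^e-u^e)^{N-1}$ is monic of degree $e(N-1)$, and all its other terms have degree at most $e(N-2)$; also $\deg S=e-1$. Hence the coefficient of $X^{eN-1-t}$ in $G$ is exactly $u^t$ for $0\le t<e$. So $1,u,\ldots,u^{2^L-1}\in V_{N+d-1}$ for every $L$, i.e.\ $\F_2[u]\subseteq V_{N+d-1}$, which contradicts \cref{prop:V-properties}(d) directly.

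The paper reaches the same conclusion slightly differently. It replaces the SFT exponent by a power $q=2^r$, so that Frobenius gives $(X-u)^{2^{n+r}-1}=\sum_j u^jX^{2^{n+r}-1-j}$ outright. Your top-block reading works for any exponent; it is the $s=N$ block of \eqref{eq:coefficients-char0}. Once this step is supplied, together with the remark that $d\geq1$ because $Q\cap R=(0)$, your argument proves that $Q$ is not SFT. That $R[X]$ is not SFT then follows by definition.
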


\begin{proof}
For $n\geq0$, \eqref{eq:u-bounds-char2} and the Frobenius homomorphism give
\begin{equation}\label{eq:gn-char2}
  g_n:=c(X-u)^{2^n}=cX^{2^n}+cu^{2^n}\in R[X].
\end{equation}
Indeed, $u^{2^n}\in V_1$, so $cu^{2^n}\in cV_1\subseteq A$.
Evaluation at $u$ vanishes, and hence $g_n\in Q$.

Suppose that $Q$ were SFT.  Then there would be a finitely generated ideal
$J=(h_1,\ldots,h_t)\subseteq Q$ and an integer $e\geq1$ such that
\begin{equation}\label{eq:sft-assumption}
  f^e\in J\qquad\text{for every }f\in Q.
\end{equation}
The ideal $J$ is nonzero, since otherwise $g_0^e=0$ in the domain $R[X]$.
Discarding zero generators if necessary, put
\[
  d:=\max_i\deg h_i.
\]
Because $Q\cap R=(0)$, each $h_i$ has positive degree and $d\geq1$.

Choose $r\geq0$ so that $q:=2^r\geq e$.  Then $f^q\in J$ for every
$f\in Q$, because $f^q=f^{q-e}f^e$.  For each $n$ there are
$a_{i,n}\in R[X]$ such that
\begin{equation}\label{eq:representation-char2}
  g_n^q=\sum_{i=1}^ta_{i,n}h_i.
\end{equation}
By \cref{lem:division-char2}, write
$h_i=(X-u)\widetilde h_i$.  Substituting \eqref{eq:gn-char2} into
\eqref{eq:representation-char2} and cancelling the nonzero polynomial
$X-u$ in the domain $K(c)[X]$ gives
\begin{equation}\label{eq:cancelled-char2}
  c^q(X-u)^{2^{n+r}-1}
    =\sum_{i=1}^ta_{i,n}\widetilde h_i.
\end{equation}

Formula \eqref{eq:division-formula} puts every coefficient of
$\widetilde h_i$ in $K[c]_{(c)}$; the coefficients of $a_{i,n}$ already
lie there because $R\subseteq K[c]_{(c)}$.  The injective map
\[
  K[c]_{(c)}[X]\longrightarrow K[[c]][X]
\]
therefore allows us to compare the coefficient of $c^q$ on both sides.
Write
\[
  a_{i,n}=\sum_\alpha A_{i,\alpha}(c)X^\alpha,
  \qquad
  \widetilde h_i=\sum_\beta H_{i,\beta}(c)X^\beta.
\]
The formal expansions from
\cref{lem:formal-expansion,lem:division-char2} give
\[
  [c^s]A_{i,\alpha}\in V_s,
  \qquad [c^\ell]H_{i,\beta}\in V_{\ell+d-1}\subseteq V_{\ell+d}.
\]
For a fixed $X$-degree $m$,
\begin{equation}\label{eq:convolution-char2}
 [c^qX^m](a_{i,n}\widetilde h_i)
 =\sum_{\alpha+\beta=m}\ \sum_{s+\ell=q}
   [c^s]A_{i,\alpha}\,[c^\ell]H_{i,\beta}.
\end{equation}
Both sums are finite: the factors are polynomials in $X$, and there are only
$q+1$ pairs $(s,\ell)$ of nonnegative integers with $s+\ell=q$.  Every
summand belongs to
\[
  V_sV_{\ell+d}\subseteq V_{q+d}.
\]
Since $V_{q+d}$ is a vector space, the same remains true after all sums and
cancellations on the right side of \eqref{eq:cancelled-char2}.

Since $u$ is constant with respect to $c$, the coefficient of $c^q$ on the
left side is exactly $(X-u)^{2^{n+r}-1}$.  Every coefficient of this
polynomial therefore belongs to the one fixed space $V_{q+d}$, independent
of $n$.

Set $N=n+r$.  In characteristic $2$,
\[
  (X-u)^{2^N}=X^{2^N}-u^{2^N}.
\]
The difference-of-powers identity yields
\begin{equation}\label{eq:geometric-char2}
  (X-u)^{2^N-1}
   =\frac{X^{2^N}-u^{2^N}}{X-u}
   =\sum_{j=0}^{2^N-1}u^jX^{2^N-1-j}.
\end{equation}
It follows that
\[
  1,u,\ldots,u^{2^{n+r}-1}\in V_{q+d}
\]
for every $n$.  Letting $n$ grow gives
$\F_2[u]\subseteq V_{q+d}$, contradicting
\cref{prop:V-properties}(d).  Hence $Q$ is not SFT.
\end{proof}

Combining \cref{thm:R-char2,thm:Q-not-sft} proves
\cref{thm:main}.

\section{Extension to prime characteristic}
\label{sec:general-p}

We record after the main proof that the same construction works for every
prime characteristic.  Only the Frobenius exponent and the dimension count
change.

\begin{theorem}\label{thm:general-p}
For every prime $p$, there is a one-dimensional local SFT domain $R_p$ of
characteristic $p$ such that $R_p[X]$ is not SFT.  The non-SFT prime may be
chosen to be an upper to zero.
\end{theorem}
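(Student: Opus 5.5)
We repeat the construction of \cref{sec:char2,sec:bad-prime} with $2$ replaced by $p$ throughout. Put $K=\F_p(u)$ with $u$ transcendental over $\F_p$, and enumerate $K^\times=\{\xi_1,\xi_2,\ldots\}$ with $\xi_1=u$. Let $V_n$ be the $\F_p$-span of the products $\prod_\ell\xi_{i_\ell}^{p^{e_\ell}}$ with $\sum_\ell i_\ell\le n$. Then define $A=\bigoplus_n c^nV_n$, $R_p=A_{\fm}$ and $Q=\ker(R_p[X]\to K(c),\,X\mapsto u)$ exactly as before. The task is to check, statement by statement, where the number $2$ was actually used.

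In \cref{prop:V-properties}, parts (a) and (b) are unchanged. For (c), $p$-th powers preserve $V_n$ by the Frobenius: for $a\in\F_p$ we have $a^p=a$, and raising to the $p$-th power only increases Frobenius exponents. For (d), take $F_L=K^{p^L}=\F_p(u^{p^L})$. The polynomial $T^{p^L}-t$ is Eisenstein, so $1,u,\ldots,u^{p^L-1}$ is an $F_L$-basis of $K$. The dimension bound $\sum_{r\le E}(EL)^r$ is polynomial in $L$, while $p^L$ is exponential, so the same contradiction follows. \Cref{lem:formal-expansion} is characteristic-free; there we must multiply by $s_0^{-1}\in\F_p$.

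The main change is in \cref{lem:two-estimates}(a). It becomes: $z\in M$ implies $z^p\in cR$. This holds because $b^p/c=\sum_n c^{pn-1}b_n^p$ and $b_n^p\in V_n\subseteq V_{pn-1}$. For part (b), replace $2^r$ by $p^r$. Choose $r$ with $(j-\nu)p^r\ge j+s$, and set $\lambda=c^sb_\nu^{-p^r}$. Here $b_\nu^{-p^r}\in V_s$ by (c). Additivity of Frobenius then gives $\lambda b^{p^r}=c^{\nu p^r+s}(1+w)$ with $w\in\fm$. \Cref{thm:R-char2} follows with exponent $pD$ in place of $2D$. The spectrum argument is unchanged.

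For the non-SFT prime, $\Frac(R_p)=K(c)$, and $c(X-u)\in Q$ as before. \Cref{lem:division-char2} uses only $u^t\in V_t$, so it is unchanged. In \cref{thm:Q-not-sft}, set $g_n=c(X-u)^{p^n}=cX^{p^n}-cu^{p^n}$, which lies in $R_p[X]$ because $u^{p^n}\in V_1$. Take $q=p^r\ge e$. Cancelling $X-u$ gives $c^q(X-u)^{p^{n+r}-1}=\sum_i a_{i,n}\widetilde h_i$. The coefficient-of-$c^q$ comparison puts every $X$-coefficient of $(X-u)^{p^N-1}$ into $V_{q+d}$.

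The point needing care is that, for $p$ odd, these coefficients are $\pm$ multiples of powers of $u$ rather than the powers themselves. Using $(X-u)^{p^N}=X^{p^N}-u^{p^N}$, we get
\[
(X-u)^{p^N-1}=\sum_{j=0}^{p^N-1}u^jX^{p^N-1-j},
\]
with all coefficients equal to $1$. So $1,u,\ldots,u^{p^N-1}\in V_{q+d}$ for every $n$, which contradicts (d). Hence $Q$ is a non-SFT upper to zero, and $R_p[X]$ is not SFT.
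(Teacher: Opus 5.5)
Your proposal is correct and follows essentially the same route as the paper: you rerun the characteristic-two construction with Frobenius exponents $p^e$, and you locate every place the prime enters. These are Frobenius stability, the escape property via $F_L=\F_p(u^{p^L})$, the estimate $z^p\in cR_p$, the $p^r$-power divisibility argument giving SFT exponent $pD$, and the final identity $(X-u)^{p^N-1}=\sum_j u^jX^{p^N-1-j}$. Your concern about signs for odd $p$ is settled exactly as you say, and it would be harmless anyway, since nonzero $\F_p$-multiples of $u^j$ lying in the $\F_p$-space $V_{q+d}$ already give the contradiction.
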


\begin{proof}
Let $u$ be transcendental over $\F_p$, put $K=\F_p(u)$, and enumerate
\[
  K^\times=\{\xi_1,\xi_2,\ldots\},\qquad \xi_1=u.
\]
For $n\geq0$, define
\begin{equation}\label{eq:V-general}
 V_n=\Span_{\F_p}\left\{
  \prod_{\ell=1}^r\xi_{i_\ell}^{p^{e_\ell}}:
  r\geq0,\ e_\ell\geq0,\ \sum_{\ell=1}^ri_\ell\leq n
 \right\}.
\end{equation}
Exactly as in \cref{prop:V-properties}, these spaces form an exhaustive
increasing multiplicative filtration with
\begin{equation}\label{eq:properties-general}
 V_0=\F_p,\qquad V_nV_m\subseteq V_{n+m},\qquad
 v^p\in V_n\quad(v\in V_n).
\end{equation}
Indeed, Frobenius raises each exponent $p^e$ to $p^{e+1}$ without changing
the sum of the indices.

The escape property also persists:
\begin{equation}\label{eq:outside-general}
  \F_p[u]\nsubseteq V_E\qquad(E\geq0).
\end{equation}
To see this, fix $E$ and set $F_L=\F_p(u^{p^L})$.  The polynomial
$T^{p^L}-t$ is Eisenstein at $t$ over $\F_p[t]$, so
$1,u,\ldots,u^{p^L-1}$ are linearly independent over $F_L$.  After factors
with Frobenius exponent at least $L$ are absorbed into $F_L$, a monomial in
$V_E$ has at most $E$ factors chosen from at most $EL$ possibilities.  Hence
\[
  \dim_{F_L}(F_LV_E)\leq\sum_{r=0}^E(EL)^r.
\]
For $E=0$ the conclusion is immediate from $V_0=\F_p$; for fixed $E>0$,
the displayed bound is eventually smaller than $p^L$, proving
\eqref{eq:outside-general}.  We also have
\[
  u\in V_1,\qquad u^j\in V_j,\qquad u^{p^n}\in V_1.
\]

Let $c$ be transcendental over $K$ and put
\begin{equation}\label{eq:R-general}
 A=\bigoplus_{n\geq0}c^nV_n,\qquad
 \fm=A_+=\bigoplus_{n\geq1}c^nV_n,\qquad
 R_p=A_{\fm}.
\end{equation}
The proof of \cref{lem:formal-expansion} uses only multiplicativity, so every
element of $R_p$ again has a formal expansion whose $c^n$-coefficient lies
in $V_n$.  If $M_p=\fm R_p$, then Frobenius gives
\begin{equation}\label{eq:p-control-general}
  z^p\in cR_p\qquad(z\in M_p),
\end{equation}
because, for $b=\sum_{n\geq1}c^nb_n\in\fm$,
\[
  \frac{b^p}{c}=\sum_{n\geq1}c^{pn-1}b_n^p\in A.
\]
Here $b_n^p\in V_n\subseteq V_{pn-1}$.

The divisibility argument in \cref{lem:two-estimates}(b) is unchanged after
replacing $2^r$ by $p^r$.  Explicitly, if the least term of a nonzero
$b\in\fm$ is $c^\nu b_\nu$, choose $s$ with $b_\nu^{-1}\in V_s$ and $r$
so large that $(j-\nu)p^r\geq j+s$ for every higher nonzero term.  Then
\[
 \lambda:=c^sb_\nu^{-p^r}\in A,
 \qquad
 \lambda b^{p^r}=c^{\nu p^r+s}(1+w),\quad w\in\fm.
\]
Thus every $0\neq a\in M_p$ divides some power $c^D$.  Equations
\eqref{eq:p-control-general} and $c^D\in aR_p$ show, exactly as in
\cref{thm:R-char2}, that every nonzero proper ideal $I$ has SFT data
$(I,(a),pD)$ and that
\[
  \Spec R_p=\{(0),M_p\}.
\]
Hence $R_p$ is a one-dimensional local SFT domain.

Exhaustivity gives $\Frac(R_p)=K(c)$.  Let
\[
Q_p=\ker\bigl(R_p[X]\longrightarrow K(c),\ X\longmapsto u\bigr).
\]
The quotient is the domain $R_p[u]$, the map is injective on $R_p$, and
$c(X-u)\neq0$ lies in the kernel.  Thus $Q_p$ is an upper to zero.  It
contains
\begin{equation}\label{eq:gn-general}
  g_n=c(X-u)^{p^n}=cX^{p^n}-cu^{p^n}\qquad(n\geq0).
\end{equation}
Suppose that $Q_p$ had SFT data $(Q_p,J,e)$, where
$J=(h_1,\ldots,h_t)$.  Then $J\neq0$; discard zero generators and put
$d=\max_i\deg h_i\geq1$, since $Q_p\cap R_p=(0)$.  Choose $q=p^r\geq e$.
The division and coefficient arguments of
\cref{lem:division-char2,thm:Q-not-sft} are characteristic-free once
$u^j\in V_j$ is known.  They
give, after cancelling one factor $X-u$, identities
\[
  c^q(X-u)^{p^{n+r}-1}
    =\sum_{i=1}^t a_{i,n}\widetilde h_i,
\]
in which every $c^\ell$-coefficient of $\widetilde h_i$ lies in
$V_{\ell+d}$.  The finite convolution for the coefficient of $c^q$ then
puts every coefficient of $(X-u)^{p^{n+r}-1}$ in the fixed space $V_{q+d}$.
But
\[
  (X-u)^{p^N-1}
   =\frac{X^{p^N}-u^{p^N}}{X-u}
   =\sum_{j=0}^{p^N-1}u^jX^{p^N-1-j}.
\]
Letting $n$ grow forces $\F_p[u]\subseteq V_{q+d}$, contradicting
\eqref{eq:outside-general}.  Thus $Q_p$, and therefore $R_p[X]$, is not
SFT.
\end{proof}

\section{Characteristic zero}
\label{sec:characteristic-zero}

We first give a direct equicharacteristic-zero construction.  It replaces
Frobenius by a weighted filtration and a roots-of-unity norm.

\begin{theorem}\label{thm:equicharacteristic-zero}
There is a one-dimensional local SFT domain $R_0$ containing $\mathbb Q$
such that $R_0[X]$ is not SFT.  More precisely, $R_0[X]$ has a non-SFT
upper to zero.
\end{theorem}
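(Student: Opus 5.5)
The plan is to imitate the characteristic-$p$ construction of \cref{sec:char2}, with $K=\mathbb Q(u)$. Frobenius was used three times there. It gave the square control of \cref{lem:two-estimates}(a). It gave the divisibility $c^D\in aR$ of \cref{lem:two-estimates}(b), by raising the leading coefficient to a high power without mixing terms. And it produced the cheap elements $g_n=c(X-u)^{2^n}$ of $Q$. In characteristic zero all three must come from somewhere else.

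\textbf{The ring.} I would take a multiplicative, exhaustive filtration $V_n$ of $K$ by ``cost'', exactly as in \eqref{eq:Vn-char2} but without Frobenius powers. I would then reweight the $c$-degrees by a superadditive function, setting $A=\bigoplus_n c^nV_{\varphi(n)}$ with, say, $\varphi(n)=n^2$. Superadditivity keeps $A$ a ring, since $V_{\varphi(n)}V_{\varphi(m)}\subseteq V_{\varphi(n+m)}$.

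Cross terms in a product then fall strictly below the level their $c$-degree allows. Indeed, $\sum n_i^2\le(\sum n_i-1)^2$ once there are at least three factors, so $z^3\in cR$ for $z\in M$. This replaces \cref{lem:two-estimates}(a) by a cube estimate.

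For part (b) the naive argument fails. After dividing by the leading term $c^\nu b_\nu$, the ratios $b_j/b_\nu$ are too expensive for their new $c$-degree $j-\nu$. Here I would use the roots-of-unity norm. Adjoin, or work over, enough roots of unity, and form $\prod_{\zeta^k=1}b(\zeta c)$. This kills every $c$-degree not divisible by $k$, so it plays the role of $b^{p^r}$. In the norm the surviving coefficients are polynomial in the $b_j$, while the $c$-degrees are multiplied by $k$. With $\varphi$ quadratic, the gap $\varphi(k(j-\nu))$ eventually dominates the cost $k\varphi(j)+$const. One then gets $\lambda\,N(b)=c^D(1+w)$ with $w\in\fm$, hence $c^D\in aR$. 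The proof of \cref{thm:R-char2} then goes through verbatim and shows that $R_0$ is local, one-dimensional and SFT. \cref{lem:formal-expansion} carries over, because it only uses multiplicativity.

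\textbf{The bad prime.} Again take $Q=\ker(X\mapsto \theta)$ for a suitable $\theta\in K$. The elements $g_n\in Q$ must be cheap: they should lie in $c^{k_n}R[X]$ with $k_n$ small, yet force many independent elements into one level. I would run the argument of \cref{lem:division-char2,thm:Q-not-sft} unchanged: write $g_n^q\in J$, cancel one factor $X-\theta$, and read off the coefficient of $c^{qk_n}$. This places all coefficients of $g_n^q/(c^{qk_n}(X-\theta))$ in $V_{\varphi(qk_n)+O(d)}$. A contradiction needs an escape statement analogous to \cref{prop:V-properties}(d). That is, these coefficients must span a space whose dimension, over a suitable subfield, grows faster than any fixed level permits.

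\textbf{Main obstacle.} The hardest step, and the one I would attack first, is choosing the filtration and the elements $g_n$ so that both requirements hold at once. In characteristic $p$, $(X-u)^{p^n}$ has two terms but its quotient by $X-u$ has $p^n$ terms. In characteristic zero a cheap element of $Q$ with an expensive cofactor must be built from norms $\prod_{\zeta}(X-\zeta\theta)=X^k-\theta^k$. So I would let $\theta$ range over roots, or consider $X^{k}-u$ type relations, and put $u^{1/k}$-type elements at low cost. A polynomial-versus-exponential count, as in \eqref{eq:dimension-char2}, should then show that the quotient $(X^k-\theta^k)/(X-\theta)=\sum\theta^jX^{k-1-j}$ cannot stay in a fixed $V_E$. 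Getting the weights so that this count still beats the superadditive reweighting $\varphi$ is where I expect the real work to lie.
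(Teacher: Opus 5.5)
\textbf{The coefficient ring.} Your treatment of $R_0$ itself is sound. With $A=\bigoplus_n c^nV_{\varphi(n)}$ and $\varphi(n)=n^2$, the inequality $n_1^2+n_2^2+n_3^2\le(n_1+n_2+n_3-1)^2$ for $n_i\ge1$ gives $z^3\in cR$.

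The roots-of-unity norm of order $q$ also works. Let $L$ be the top $c$-degree of $b$ and $b_\nu^{-1}\in V_s$, and take $q\ge\varphi(L)+s$. The coefficient of $c^{rq}$ ($r\ge1$) in the normalized norm is a sum of products of at most $q$ ratios $b_j/b_\nu$. It therefore costs at most $q(\varphi(L)+s)\le q^2\le\varphi(rq)$.

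One detail needs fixing. The step $b(\zeta c)\in A$ requires $\zeta V_n\subseteq V_n$, so the roots of unity must be scalars for every level, not merely elements of $K$. The paper takes $k=\overline{\mathbb Q}$, $K=k(u)$, and $k$-spans.

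\textbf{The gap.} The gap is the non-SFT upper to zero, which is the real content of the theorem and which you leave open as the ``main obstacle''. With the filtration you describe (products of the $\xi_i$ of bounded total index), every level is finite-dimensional. A fixed level therefore contains only finitely many of the linearly independent powers $u^m$. This is exactly why you are forced to let the $c$-degree $k_n$ of $g_n$ grow, and then have to fight $\varphi(qk_n)$.

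The directions you hint at do not resolve this. Letting $\theta$ vary, or using relations $X^k-u$, changes the kernel with $k$, whereas the argument needs one fixed prime.

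\textbf{The missing idea.} Keep $\theta=u$ and make a sparse infinite family of powers of $u$ cheap by fiat. The paper adds separately labelled letters $a_j\mapsto u^{2^j}$ of weight $1$ to the letters $x_i\mapsto\xi_i$ of weight $i$. It lets $V_n$ be the $k$-span of images of words of weight at most $2^n-1$. Then $u^{2^n}\in V_1$ for every $n$, so all the elements $g_n=c(X^{2^n}-u^{2^n})\in Q_0$ have $c$-degree $1$.

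From $g_n^N\in J$, cancel $X-u$ and read off the coefficient of $c^N$. This puts every coefficient $\pm\binom{N-1}{s-1}u^{2^nN-1-t}$ of $(X^{2^n}-u^{2^n})^N/(X-u)$ into the single level $V_{N+d}$, independent of $n$. The binomial coefficient is nonzero because the characteristic is zero. Since the level no longer depends on $n$, the reweighting never competes with the count; your quadratic $\varphi$ would serve equally well.

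Escape then follows from sparsity. After clearing one denominator, elements of $V_E$ have $u$-supports in $F+S_B$. Here $F$ is finite and $S_B$ is the set of sums of at most $B=2^E-1$ powers of $2$, which has $O_B((\log L)^B)$ elements up to $L$. Your dimension-count idea would also work here: count over $k(u^{2^L})$, absorbing the letters $a_j$ with $j\ge L$ into that subfield.

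Without this choice of cheap elements at bounded $c$-degree, your proposal does not produce a non-SFT prime.
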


\begin{proof}
Let $k=\overline{\mathbb Q}$, a countable field containing every root of
unity.  Let $u$ be transcendental over $k$ and put $K=k(u)$.  The field $K$
is countable, so enumerate
$K^\times=\{\xi_1,\xi_2,\ldots\}$ with $\xi_1=1$.
Use the labelled alphabet
\[
  \{a_j:j\geq0\}\amalg\{x_i:i\geq1\},
  \qquad a_j\mapsto u^{2^j},\quad x_i\mapsto\xi_i,
\]
with weights $w(a_j)=1$ and $w(x_i)=i$.  Labels are retained even when
their images in $K$ agree.  For $n\geq0$, define
\begin{equation}\label{eq:V-char0}
 V_n:=\Span_k\left\{
  \text{images of words of total weight at most }2^n-1
 \right\}\subseteq K,
\end{equation}
where the empty word is allowed.

These spaces are increasing and exhaustive, with $V_0=k$.  Concatenation
of words and the elementary budget inequalities give
\begin{equation}\label{eq:filtration-char0}
 V_nV_m\subseteq V_{n+m},\qquad
 V_iV_jV_\ell\subseteq V_{i+j+\ell-1}\quad(i,j,\ell\geq1).
\end{equation}
Indeed, if $B_r=2^r-1$, then
$B_n+B_m\leq B_{n+m}$ and
$B_i+B_j+B_\ell\leq B_{i+j+\ell-1}$ for positive indices.  We also have
\begin{equation}\label{eq:powers-char0}
 u^{2^j}\in V_1\quad(j\geq0),\qquad u^t\in V_t\quad(t\geq0).
\end{equation}

We need the escape property
\begin{equation}\label{eq:escape-char0}
  k[u]\nsubseteq V_E\qquad(E\geq0).
\end{equation}
Fix $E$ and set $B=2^E-1$.  A word of weight at most $B$ has image
$h(u)u^s$, where $h$ belongs to a finite set of products of
$\xi_1,\ldots,\xi_B$, and $s$ is a sum of at most $B$ powers of $2$.
Let $S_B$ be the set of all such $s$.  Then
\[
  \#(S_B\cap[0,L])=O_B((\log L)^B).
\]
After clearing one common denominator for the finite set of possible $h$,
there are a nonzero $\Delta\in k[u]$ and a finite set $F\subseteq\mathbb N$
such that
\[
  \operatorname{supp}(\Delta f)\subseteq F+S_B\qquad(f\in V_E).
\]
If $k[u]\subseteq V_E$, then, for any
$r\in\operatorname{supp}(\Delta)$, this would force
$r+\mathbb N\subseteq F+S_B$.  The left side has linear growth, whereas
the right side has $O_B((\log L)^B)$ elements up to $L$.  This proves
\eqref{eq:escape-char0}.

Let $c$ be transcendental over $K$ and set
\[
 A_0=\bigoplus_{n\geq0}c^nV_n\subseteq K[c],\qquad
 \fm_0=(A_0)_+,
 \qquad R_0=(A_0)_{\fm_0},\qquad M_0=\fm_0R_0.
\]
By \eqref{eq:filtration-char0}, $A_0$ is a graded domain and
$A_0/\fm_0\cong k$, so $R_0$ is local with maximal ideal $M_0$.  The proof
of \cref{lem:formal-expansion} uses only multiplicativity and therefore
shows that every $r\in R_0$ has a unique expansion
\begin{equation}\label{eq:formal-char0}
  r=\sum_{n\geq0}c^nr_n\in K[[c]],\qquad r_n\in V_n.
\end{equation}
The second inclusion in \eqref{eq:filtration-char0} gives
\begin{equation}\label{eq:cube-char0}
  z^3\in cR_0\qquad(z\in M_0),
\end{equation}
because every coefficient of $b^3/c$, for $b\in\fm_0$, is a sum of
triple products whose filtration indices drop by one; localization gives
the assertion for $M_0$.

We next show that every $0\neq a\in M_0$ divides a power of $c$.  Write
$a=b/t$ with $b\in\fm_0$ and $t\in A_0\setminus\fm_0$.  Since $t$ is a
unit, $aR_0=bR_0$, so it suffices to work with
\[
  b=\sum_{j=\nu}^{L}c^jb_j\in\fm_0,
  \qquad b_j\in V_j,\quad b_\nu\neq0.
\]
Choose $i_0$ with $\xi_{i_0}=b_\nu^{-1}$ and put $w_0=i_0$.
Choose $q$ so large that
\[
  q(2^L-1+w_0)\leq2^q-1.
\]
Since $T^q-1$ has exactly $q$ distinct roots in $k$, put
$\mu_q=\{\zeta\in k:\zeta^q=1\}$ and form
\[
  \mathcal N_q(b):=\prod_{\zeta\in\mu_q}b(\zeta c).
\]
After factoring the lowest term, invariance under $c\mapsto\eta c$ for
$\eta\in\mu_q$ gives
\begin{equation}\label{eq:norm-char0}
 \mathcal N_q(b)=\varepsilon c^{\nu q}b_\nu^q
  \left(1+\sum_{r\geq1}\theta_r c^{rq}\right),
  \qquad \varepsilon\in k^\times.
\end{equation}
Only finitely many $\theta_r$ are nonzero.
Each ratio $b_j/b_\nu$ is a $k$-linear combination of words of weight at
most $2^L-1+w_0$.  Hence every product of at most $q$ such ratios has weight
at most $q(2^L-1+w_0)\leq2^q-1$.  Thus
$\theta_r\in V_q\subseteq V_{rq}$.  The same bound applied to the $q$-fold
product of the label for $b_\nu^{-1}$ gives $b_\nu^{-q}\in V_q$.
Therefore
\[
 \lambda:=c^qb_\nu^{-q}\in A_0,
 \qquad \omega:=\sum_{r\geq1}\theta_rc^{rq}\in\fm_0.
\]
Every $b(\zeta c)$ lies in $A_0$, since $\zeta\in k$, and the factor for
$\zeta=1$ is $b$.  Thus $\mathcal N_q(b)\in bA_0$, and
\eqref{eq:norm-char0} yields
\[
 c^{\nu q+q}=\varepsilon^{-1}(1+\omega)^{-1}
   \lambda\mathcal N_q(b)\in bR_0=aR_0.
\]

Let $I$ be a nonzero proper ideal of $R_0$ and choose $0\neq a\in I$.
Since $I\subseteq M_0$, for some $D$ the preceding paragraph gives
$c^D\in aR_0$.  By
\eqref{eq:cube-char0}, every $z\in I$ satisfies
$z^{3D}\in c^DR_0\subseteq aR_0$.  Thus $I$ is SFT with data
$(I,(a),3D)$, and $R_0$ is SFT.  The same two facts show that every nonzero
prime contains $c$ and then $M_0$; hence
\[
  \Spec R_0=\{(0),M_0\},\qquad \dim R_0=1.
\]
In particular, $R_0$ is equicharacteristic zero, with residue field $k$.

Exhaustivity gives $\Frac(R_0)=K(c)$.  Define
\[
 Q_0:=\ker\bigl(R_0[X]\longrightarrow K(c),\ X\longmapsto u\bigr).
\]
This is a prime with $Q_0\cap R_0=(0)$, and it is nonzero because, for
$m=2^n$,
\begin{equation}\label{eq:gn-char0}
  g_n:=c(X^m-u^m)\in Q_0;
\end{equation}
indeed, $u^m\in V_1$.  Thus $Q_0$ is an upper to zero.

Suppose that $Q_0$ had SFT data $(Q_0,J,e)$, with
$J=(h_1,\ldots,h_t)$.  The ideal $J$ is nonzero because $R_0[X]$ is a
domain and $g_0\neq0$.  Discard zero generators, put
$d=\max_i\deg h_i\geq1$, and fix an integer $N\geq e$.  The proof of
\cref{lem:division-char2}, using \eqref{eq:powers-char0}, writes
$h_i=(X-u)\widetilde h_i$ so that every $c^\ell$-coefficient of every
$X$-coefficient of $\widetilde h_i$ lies in $V_{\ell+d-1}$.
Since $g_n^N\in J$, cancellation of $X-u$ gives
\begin{equation}\label{eq:cancelled-char0}
 c^N\frac{(X^m-u^m)^N}{X-u}
   =\sum_{i=1}^ta_{i,n}\widetilde h_i,
  \qquad a_{i,n}\in R_0[X].
\end{equation}
Use \eqref{eq:formal-char0} and compare the coefficient of $c^N$.
The convolution is finite, and a term with $c$-degrees $s$ and $\ell$ on
the right belongs to $V_sV_{\ell+d-1}\subseteq V_{N+d}$.  Consequently
every coefficient of
\[
  G_{m,N}:=\frac{(X^m-u^m)^N}{X-u}
\]
lies in the fixed space $V_{N+d}$.

For $0\leq t<mN$, set $s=\lfloor t/m\rfloor+1$.  Direct division gives
\begin{equation}\label{eq:coefficients-char0}
 [X^t]G_{m,N}
  =(-1)^{N-s}\binom{N-1}{s-1}u^{mN-1-t}.
\end{equation}
The scalar is nonzero because $k$ has characteristic zero.  Hence
$1,u,\ldots,u^{N2^n-1}\in V_{N+d}$ for every $n$.  Letting $n$ grow
contradicts \eqref{eq:escape-char0}.  Thus $Q_0$ is not SFT, completing
the proof.
\end{proof}

For comparison, the prime-characteristic examples also lift to characteristic
zero by a residue-field pullback.

\begin{theorem}\label{thm:mixed-characteristic}
For every prime $p$, there is a two-dimensional local SFT domain $T_p$ of
mixed characteristic $(0,p)$ such that $T_p[X]$ is not SFT.
\end{theorem}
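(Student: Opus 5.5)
The plan is a pullback along the residue field. Let $R_p$ be the ring of \cref{thm:general-p}, with maximal ideal $M_p$. Its residue field is $R_p/M_p\cong\F_p$, so $R_p$ contains no field that could be lifted to characteristic zero. I would therefore lift at the level of the coefficient ring. Take a complete discrete valuation ring $W$ of mixed characteristic $(0,p)$ whose residue field is $\operatorname{Frac}(R_p)=K(c)$; a Cohen ring of $K(c)$ works. Let $\pi\colon W\to K(c)$ be the residue map, and set
\[
 T_p:=\pi^{-1}(R_p)\subseteq W.
\]
Then $T_p$ is a domain containing $\mathbb Z$, it contains $pW=\ker\pi$, and $T_p/pW\cong R_p$. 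The standard pullback results (Fontana-type $D+\fm$ constructions) give the following. $T_p$ is local, with maximal ideal $\pi^{-1}(M_p)$. Its spectrum is $\{(0),\,pW,\,\pi^{-1}(M_p)\}$, because every nonzero prime of $T_p$ contains $pW$: for $0\neq x\in T_p$, some power $x^m$ lies in $p^kW^\times\cdot$, and in particular a prime containing $x$ meets $pW$ suitably. Hence $\dim T_p=\dim W+\dim R_p=2$. The characteristic is $(0,p)$.

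\textbf{Non-SFT part.} This is the easy half. There is a surjection $T_p[X]\to R_p[X]$ with kernel $pW[X]$. If $T_p[X]$ were SFT, then every ideal of the quotient $R_p[X]$ would be SFT, since SFT data pass to images. This contradicts \cref{thm:general-p}. The preimage of $Q_p$ is then a non-SFT prime of $T_p[X]$.

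\textbf{SFT part.} This is the main obstacle. Let $I$ be an ideal of $T_p$. I would split into two cases.

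\emph{Case $I\subseteq pW$.} $I$ is an ideal of $T_p$, and $IW$ is principal in the DVR $W$, say $IW=p^kW$. Choose $a\in I$ of minimal valuation $k$. For $x\in I$ we have $x/a\in W$. Since $pW\subseteq T_p$, this gives $x\cdot(x/a)\in I\cdot W\subseteq$ the conductor region: concretely, $x^2=a\cdot(x^2/a)$ with $x^2/a\in p^kW\subseteq pW\subseteq T_p$ when $k\geq1$. So $x^2\in aT_p$, and $I$ is SFT with exponent $2$.

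\emph{Case $I\not\subseteq pW$.} Then $\pi(I)$ is a nonzero ideal of $R_p$. It is SFT by \cref{thm:general-p}, with data $(\pi(I),B,N)$; moreover, in $R_p$ a single element $\bar a$ suffices, with $z^{pD}\in\bar aR_p$. Lift $\bar a$ to $a\in I$; this $a$ is a unit in $W$. For $x\in I$ we get $x^{pD}=ay+w$ with $y\in T_p$ and $w\in pW$. Then $w=a\cdot(w/a)$, and $w/a\in pW\subseteq T_p$ because $a\in W^\times$. Hence $x^{pD}\in aT_p$, and $I$ is SFT with data $(I,(a),pD)$.

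I expect the bookkeeping to be the delicate point: namely, that $pW\subseteq T_p$ acts as a common ideal, and that elements of $I$ outside $pW$ are units of $W$. Once that is checked, both cases close, and $T_p$ is SFT.
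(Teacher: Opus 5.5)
Your proposal is correct and follows the paper's overall route. You pull $R_p\subseteq K(c)$ back along the residue map of a DVR whose maximal ideal is $pW$. You get non-SFT from the quotient $T_p[X]/(pW)[X]\cong R_p[X]$. You check the SFT property separately for $I\subseteq pW$ and $I\not\subseteq pW$.

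\textbf{The choice of DVR.} The paper uses the explicit DVR $\mathbb Z[U,C]_{(p)}$ rather than a Cohen ring. Completeness is never used, so the explicit choice avoids Cohen structure theory, but yours works equally well.

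\textbf{The case $I\not\subseteq pW$.} This is where your route genuinely differs, and it is shorter. You note that $a\in I\setminus pW$ is a unit of $W$, so $pW=a\,(a^{-1}pW)\subseteq aT_p$, and the error term is absorbed at once. In fact this shows $pW\subseteq I$, hence $I=\pi^{-1}(\pi(I))$. So any SFT data $(\beta_1,\dots,\beta_t;e)$ for $\pi(I)$ lift to SFT data for $I$ with the same exponent: some $\beta_i\neq0$ because $R_p$ is a domain, and its lift lies outside $pW$. You therefore do not even need the single-generator refinement from the characteristic-$p$ proof. The paper instead adjoins an extra generator $a\in I\cap pW_p$ of minimal valuation, writes $x^e=y+z$, and expands $(y+z)^p$, ending with exponent $pe$. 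Your version avoids the extra generator and keeps the exponent. Your exponent $2$ in the case $I\subseteq pW$ is also fine.

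\textbf{The spectrum.} This is the one weak spot. The sentence about a power $x^m$ and a prime that \emph{meets} $pW$ ``suitably'' does not show that a nonzero prime \emph{contains} $pW$. Citing the standard pullback description is legitimate, but your own observation gives a two-line proof. Let $P$ be a nonzero prime and $0\neq x\in P$.
\begin{itemize}
\item If $x\notin pW$, then $pW\subseteq xT_p\subseteq P$.
\item If $x\in pW$ with $v(x)=k\geq1$, then for every $z\in pW$ we have $v(z^{k+1}/x)\geq1$. Hence $z^{k+1}\in xT_p\subseteq P$, so $z\in P$.
\end{itemize}
Combined with $T_p/pW\cong R_p$ and $\Spec R_p=\{(0),M_p\}$, this gives the chain $(0)\subsetneq pW\subsetneq\pi^{-1}(M_p)$ and so $\dim T_p=2$. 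The paper argues this step instead via $T_p[1/p]=\mathbb Q(U,C)$ and $(pw)^p=p\cdot p^{p-1}w^p$.
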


\begin{proof}
Fix a prime $p$, and retain the objects
$R_p\subseteq K(c)=\F_p(u,c)$, $M_p$, and $Q_p$ from
\cref{sec:general-p}.  Localize at the height-one prime $(p)$ to obtain the
discrete valuation ring
\[
  W_p:=\mathbb Z[U,C]_{(p)}.
\]
Its maximal ideal is $pW_p$, its fraction field is $\mathbb Q(U,C)$, and its
residue field is $\F_p(U,C)$.  Identify this residue field with $K(c)$ by
$\overline U\mapsto u$ and $\overline C\mapsto c$, and let
\begin{equation}\label{eq:mixed-pullback}
  \rho_p:W_p\longrightarrow K(c),\qquad
  T_p:=\rho_p^{-1}(R_p),\qquad N_p:=\rho_p^{-1}(M_p).
\end{equation}
Then $T_p$ is a local domain with maximal ideal $N_p$: an element of $T_p$
is a unit precisely when its image in the local ring $R_p$ is a unit.
Moreover,
\[
  T_p/pW_p\cong R_p,\qquad T_p/N_p\cong\F_p.
\]
Thus $T_p$ has characteristic zero and residue characteristic $p$.

We determine its spectrum.  Since $pw\in T_p$ for every
$w\in W_p$,
\[
  T_p[1/p]=W_p[1/p]=\mathbb Q(U,C).
\]
Hence $(0)$ is the only prime of $T_p$ not containing $p$.  If a prime $P$
contains $p$ and $z=pw\in pW_p$, then
\[
  z^p=p\bigl(p^{p-1}w^p\bigr)\in P,
\]
where $p^{p-1}w^p\in pW_p\subseteq T_p$; hence $z\in P$.  Thus
$pW_p\subseteq P$, and the correspondence for $T_p/pW_p\cong R_p$,
together with \cref{thm:general-p}, gives
\[
  \Spec T_p=\{(0),pW_p,N_p\}.
\]
The inclusions are strict because $p\neq0$ and
$N_p/pW_p\cong M_p\neq(0)$.  In particular, $\dim T_p=2$.

We next verify the SFT property directly.  Let $I$ be a nonzero proper ideal
of $T_p$, and let $v_p$ denote the normalized valuation of the discrete
valuation ring $W_p$, with $v_p(0)=\infty$.  If $I\subseteq pW_p$, choose
$a\in I$ of minimal positive $p$-adic valuation.  For every $x\in I$,
\[
  v_p(x^p/a)\geq(p-1)v_p(a)\geq1,
\]
so $x^p\in aT_p$.  Thus $(I,(a),p)$ is SFT data.

Suppose instead that $I\nsubseteq pW_p$.  The nonzero ideal $\rho_p(I)$ of
the SFT ring $R_p$ has SFT data
\[
  \bigl(\rho_p(I),(\beta_1,\ldots,\beta_t),e\bigr).
\]
Choose lifts $b_i\in I$ of the $\beta_i$.  The ideal $I\cap pW_p$ is nonzero
(if $x\in I\setminus pW_p$, then $0\neq px\in I\cap pW_p$), so choose
$a\in I\cap pW_p$ of minimal $p$-adic valuation.  For $x\in I$, lift an
expression for $\rho_p(x)^e$ in $(\beta_1,\ldots,\beta_t)$ to obtain
\[
  x^e=y+z,\qquad y\in(b_1,\ldots,b_t)T_p,\quad
  z\in I\cap pW_p.
\]
Minimality gives $z^p\in aT_p$.  Every other term in the binomial expansion
\[
  x^{pe}=(y+z)^p
    =z^p+\sum_{k=1}^p\binom{p}{k}y^kz^{p-k}
\]
belongs to $(b_1,\ldots,b_t)T_p$.  Therefore
$x^{pe}\in(b_1,\ldots,b_t,a)T_p$.  The displayed finitely generated ideal
is contained in $I$, so $I$ is SFT.  The zero and unit ideals are immediate;
hence $T_p$ is an SFT ring.

Finally, reduction modulo $pW_p$ gives
\[
  T_p[X]/(pW_p)T_p[X]\cong R_p[X].
\]
Since the SFT property passes to quotients and $R_p[X]$ is not SFT by
\cref{thm:general-p}, the ring $T_p[X]$ is not SFT.  More precisely, the
inverse image $\widetilde Q_p$ of $Q_p$ is a non-SFT prime of $T_p[X]$: if
it were SFT, its image $Q_p$ would be SFT.  Its contraction is
\[
  \widetilde Q_p\cap T_p=pW_p,
\]
so this lifted prime is not an upper to zero.
\end{proof}

Thus the first construction is one-dimensional and equicharacteristic zero,
with a non-SFT upper to zero, whereas the pullback construction is
two-dimensional and mixed characteristic.

\section{Relation to earlier results}
\label{sec:discussion}

\paragraph*{Extended ideals.}
For the equicharacteristic-zero and prime-characteristic uppers to zero,
and for the mixed-characteristic prime, respectively,
\[
\begin{aligned}
 (Q_0\cap R_0)R_0[X]&=(0)\neq Q_0,\\
 (Q_p\cap R_p)R_p[X]&=(0)\neq Q_p,\\
 (\widetilde Q_p\cap T_p)T_p[X]&=(pW_p)T_p[X]\neq\widetilde Q_p.
\end{aligned}
\]
Thus none of the displayed non-SFT primes is extended from the coefficient
ring.  The theorem of Coykendall and Dutta that extensions of individual SFT
ideals remain SFT therefore does not apply to them.  This is why knowing that
every extended ideal $IR[X]$ is SFT does not settle whether $R[X]$ is SFT.

\paragraph*{Park's hypotheses.}
The examples lie outside the classes covered by Park's polynomial-extension
theorems.

\begin{proposition}\label{prop:not-normal}
The coefficient domains $R$ and $R_p$ from
\cref{sec:char2,sec:general-p}, and $R_0$ from
\cref{sec:characteristic-zero}, are not integrally closed.
\end{proposition}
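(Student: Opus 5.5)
Exhibit an element of $\Frac(R)=K(c)$ that is integral over $R$ but not in $R$. The natural candidate is $y:=cu^{j}$ for some $j$ with $u^j\notin V_1$. Such a $j$ exists by \cref{prop:V-properties}(d): $V_1$ cannot contain all of $\F_2[u]$. More economically, take any $v\in K$ with $v\notin V_1$, for instance a suitable power of $u$, and put $y=cv$.

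Integrality comes from Frobenius. In characteristic $2$, choose $n$ with $v\in V_n$. By \cref{prop:V-properties}(c), $v^{2^r}\in V_n$ for all $r$. Take $r$ with $2^r\geq n$. Then
\[
y^{2^r}=c^{2^r}v^{2^r}\in c^{2^r}V_n\subseteq c^{2^r}V_{2^r}\subseteq A\subseteq R,
\]
so $y$ is a root of the monic polynomial $T^{2^r}-y^{2^r}\in R[T]$. The same argument works for $R_p$ with $p^r$ in place of $2^r$, using $v^p\in V_n$.

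Non-membership uses \cref{lem:formal-expansion}. If $y\in R$, then its unique $c$-adic expansion is $y=c\cdot v$, so the $c^1$-coefficient $v$ would lie in $V_1$, contradicting the choice of $v$. Thus $R$ and $R_p$ are not integrally closed. The only point needing care is that the uniqueness in \cref{lem:formal-expansion} really does force the coefficient to be $v$. This holds because $y$ is already a polynomial in $c$ inside $K[[c]]$.

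For $R_0$ Frobenius is unavailable, so a different integral element is needed. Since $V_n$ is defined by weights bounded by $2^n-1$, the natural try is again $y=cv$ with $v\notin V_1$, and to seek a power $y^m=c^mv^m$ with $v^m\in V_m$.

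\begin{itemize}
\item Take $v=x_i$ of weight $i$, which is the label $\xi_i$, chosen with $\xi_i\notin V_1$. This is possible by \eqref{eq:escape-char0}: otherwise $K\subseteq V_1$.
\item Then $v^m$ is the image of a word of weight $mi$, and $mi\leq 2^m-1$ once $m$ is large. Hence $v^m\in V_m$ and $y^m\in c^mV_m\subseteq A_0$.
\item Non-membership follows as before from \eqref{eq:formal-char0}: if $y\in R_0$, then $v\in V_1$.
\end{itemize}

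The main obstacle, mild here, is the $R_0$ case: checking that the exponential weight budget $2^m-1$ eventually dominates the linear weight $mi$. The existence of some $\xi_i\notin V_1$ is automatic from escape.
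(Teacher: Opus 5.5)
Your proposal is correct and follows the paper's proof. In each case you take $cv$ with $v\notin V_1$, rule out membership using the uniqueness of the formal $c$-adic expansion, and get integrality from a power landing in $c^mV_m$, with Frobenius stability doing this in positive characteristic exactly as in the paper. The only variation is for $R_0$: the paper uses the explicit element $cu^3$, since $V_1=\Span_k(\{1\}\cup\{u^{2^j}:j\geq0\})$ and $(cu^3)^2=c^2(u^2)^3\in c^2V_1^3\subseteq c^2V_2$ by the triple-product inclusion, whereas you pick an unspecified $\xi_i\notin V_1$ via the escape property and use $mi\leq 2^m-1$, which is equally valid.
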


\begin{proof}
We give the positive-characteristic arguments together.  Let $S=R$ with
$p=2$, or let $S=R_p$ for an arbitrary prime $p$.  Choose
$j\geq1$ such that $u^j\notin V_1$, which is possible by
\eqref{eq:outside-general} in the general construction and by
\cref{prop:V-properties}(d) in characteristic $2$.  Set
\[
  z=cu^j\in K(c)=\Frac(S).
\]
If $z$ belonged to $S$, its unique formal $c$-adic expansion would have
$c$-coefficient $u^j\in V_1$, a contradiction.  Thus $z\notin S$.

On the other hand, $u^j\in V_j$.  Choose $N$ with $p^N\geq j$.
Frobenius stability gives
\[
  u^{jp^N}=(u^j)^{p^N}\in V_j\subseteq V_{p^N},
\]
and hence
\[
  z^{p^N}=c^{p^N}u^{jp^N}\in c^{p^N}V_{p^N}\subseteq S.
\]
Therefore $z$ satisfies the monic polynomial
$T^{p^N}-z^{p^N}\in S[T]$ but does not belong to $S$.  Thus $z$ is
integral over $S$ and $S$ is not integrally closed.

For $R_0$, the definition \eqref{eq:V-char0} gives
\[
 V_1=\Span_k\bigl(\{1\}\cup\{u^{2^j}:j\geq0\}\bigr),
\]
so $u^3\notin V_1$.  Hence $cu^3\notin R_0$ by
\eqref{eq:formal-char0}.  On the other hand,
$u^6=(u^2)^3\in V_1^3\subseteq V_2$, and therefore
$(cu^3)^2=c^2u^6\in A_0\subseteq R_0$.  Thus $cu^3$ is integral over
$R_0$, proving the remaining case.
\end{proof}

The direct equicharacteristic domains are one-dimensional and not integrally
closed, hence neither zero-dimensional nor Pr\"ufer.  Since $(0)$ is prime,
they also fail the hypothesis that $R/P$ be integrally closed for every
$P\in\Spec R$.  The ring $T_p$ fails the same hypothesis at $P=pW_p$, because
$T_p/pW_p\cong R_p$ is not integrally closed.  Thus all the examples are
compatible with the positive results in \cite{Park2019}.

\paragraph*{Immediate consequences.}
\begin{corollary}\label{cor:basic-consequences}
Let $S$ be any of the counterexample coefficient domains
$R$, $R_p$, $R_0$, or $T_p$ constructed above.  Then:
\begin{enumerate}[label=\textup{(\alph*)}]
\item the displayed non-SFT prime of $S[X]$ is not finitely generated;
\item $S$ is non-Noetherian;
\item $S[X_1,\ldots,X_m]$ is not SFT for every $m\geq1$.
\end{enumerate}
\end{corollary}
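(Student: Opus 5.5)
Parts (a) and (b) come straight from the definition of SFT; part (c) comes from the elementary fact that the SFT property passes to homomorphic images.

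For (a), let $\mathcal Q$ be the displayed non-SFT prime of $S[X]$: $Q$, $Q_p$, $Q_0$, or $\widetilde Q_p$. A finitely generated ideal $I$ is trivially SFT, with $B=I$ and $N=1$. So if $\mathcal Q$ were finitely generated it would be SFT, contradicting \cref{thm:Q-not-sft}, \cref{thm:general-p}, \cref{thm:equicharacteristic-zero}, or the final paragraph of the proof of \cref{thm:mixed-characteristic}, respectively. Hence $\mathcal Q$ is not finitely generated.

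For (b), suppose $S$ were Noetherian. Then Hilbert's basis theorem makes $S[X]$ Noetherian, so every ideal of $S[X]$ is finitely generated. This contradicts (a). Alternatively, Noetherian rings are SFT, and $S[X]$ is not SFT.

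For (c), I will argue that SFT passes to quotients and then use the surjection
\[
  S[X_1,\ldots,X_m]\longrightarrow S[X_1],\qquad X_1\mapsto X_1,\quad X_i\mapsto 0\ (i\geq2).
\]
The lemma is: if $\pi\colon T\to T'$ is surjective and $T$ is SFT, then $T'$ is SFT. Let $I'$ be an ideal of $T'$ and put $I=\pi^{-1}(I')$. Take SFT data $(I,B,N)$ for $I$. Then $\pi(B)$ is a finitely generated ideal contained in $I'$. Every $a'\in I'$ lifts to some $a\in I$, and $a^N\in B$ gives $a'^N\in\pi(B)$. So $(I',\pi(B),N)$ is SFT data for $I'$.

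Applying the lemma with $T=S[X_1,\ldots,X_m]$ and $T'=S[X_1]$: if $S[X_1,\ldots,X_m]$ were SFT, then $S[X_1]\cong S[X]$ would be SFT, which is false. One can even name the witness: the preimage of $\mathcal Q$ under this surjection is a non-SFT prime of $S[X_1,\ldots,X_m]$, since SFT data for it would map onto SFT data for $\mathcal Q$.

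There is no real obstacle. The only point needing care is that for $T_p$ the non-SFT prime is $\widetilde Q_p$ rather than an upper to zero, but the proof of \cref{thm:mixed-characteristic} has already shown that $\widetilde Q_p$ is non-SFT, and that is all the arguments use.
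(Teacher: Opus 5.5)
Your proposal is correct and follows the paper's proof essentially line by line. Part (a) uses that finitely generated ideals are SFT with exponent $1$, part (b) uses the Hilbert basis theorem, and part (c) uses passage of SFT to the quotient $S[X_1,\ldots,X_m]/(X_2,\ldots,X_m)\cong S[X_1]$; the only additions are your explicit (and correct) proof of the quotient lemma, which the paper takes as known, and the explicit non-SFT witness prime in $S[X_1,\ldots,X_m]$.
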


\begin{proof}
A finitely generated ideal is SFT with exponent $1$, proving (a).  If $S$
were Noetherian, the Hilbert basis theorem would make $S[X]$ Noetherian and
hence SFT, proving (b).  The case $m=1$ of (c) is part of the construction.
If $m\geq2$ and $S[X_1,\ldots,X_m]$ were SFT, its quotient by
$(X_2,\ldots,X_m)$ would be SFT because the SFT property passes to
homomorphic images.  The quotient is $S[X_1]$, a contradiction.
\end{proof}

\paragraph*{Polynomial versus power series.}
The argument is specific to polynomial rings: evaluation at $X=u$ need not
extend to the corresponding formal-power-series rings.  We therefore make no
claim about their SFT property or dimension.

\section*{Acknowledgments}
We used GPT-5.6 Sol, Claude Fable 5, and an agentic harness built around these
models to assist with literature search, hypothesis testing, the exploration
and elimination of potential approaches, wording refinement, and manuscript
proofreading.  We are very grateful to Dung V. Nguyen and Quang X. Nguyen for
technical support in the use of the AI tools and agentic system.  Viet-Hoang
Tran thanks Tho Tran Huu for support with hardware-related matters.  The
authors independently verified and streamlined the proofs and were responsible
for composing the manuscript.

\bibliographystyle{plain}
\begingroup
\footnotesize
\raggedright
\bibliography{example_paper}
\endgroup

\end{document}